\providecommand\@dotsep{5}
\def\listtodoname{List of Todos}
\def\listoftodos{\@starttoc{tdo}\listtodoname}
\numberwithin{equation}{section}
\newtheorem{theorem}{Theorem}[section]
\newtheorem{proposition}[theorem]{Proposition}
\newtheorem{lemma}[theorem]{Lemma}
\newtheorem{corollary}[theorem]{Corollary}
\newtheorem{remark}{Remark}
\newtheorem{definition}{Definition}[section]
\begin{document}

\title[Global existence and blow-up of solutions for a parabolic equation ...]
{Global existence and blow-up of solutions for a parabolic equation involving the fractional $p(x)$-Laplacian }
\author{Tahir Boudjeriou}
\address[Tahir Boudjeriou]
{\newline\indent  
	Department of Mathematics
	\newline\indent Faculty of Exact Sciences
	\newline\indent Lab. of Applied Mathematics
	\newline\indent University of Bejaia, Bejaia, 6000, Algeria 
	\newline\indent
	e-mail:re.tahar@yahoo.com}

\pretolerance10000


\begin{abstract}
	\noindent In this paper, we consider a non-local diffusion equation involving the fractional $p(x)$-Laplacian with nonlinearities of variable exponent type. Employing the sub-differential approach we establish the existence of local solutions. By combining the potential well theory with the Nehari manifold, we obtain the existence of global solutions and finite time blow-up of solutions. Moreover, we study the asymptotic stability of global solutions as time goes to infinity in some variable exponent Lebesgue spaces.
 \end{abstract}

\thanks{}
\subjclass[2010]{35R11, 35B40, 35K57, 35B41}
\keywords{Fractional $p(x)$-Laplacian, global existence, blow-up, potential well.}
\maketitle	
\section{Introduction} 

In this paper, we are interested in the global existence and finite time blow-up of solutions  to the following problem 
\begin{equation}\label{eq1}\left\{
\begin{array}{llc}
u_{t}+(-\Delta)^{s}_{p(x)}u=|u|^{q(x)-2}u & \text{in}\ & \Omega,\;t>0 , \\
u =0 & \text{in} & \mathbb{R}^{N}\backslash \Omega,\;t > 0, \\
u(x,0)=u_{0}(x), & \text{in} &\Omega , 
\end{array}\right.
\end{equation}
where $\Omega\subset \mathbb{R}^{N}$ ($N\geq 1$) is a smooth bounded domain, $s\in \left(0,1\right)$, $p$ and $q$ are continuous functions that satisfy some technical conditions, which will be mentioned later on. The operator $(-\Delta)_{p(x)}^{s}$ is defined by 
\begin{equation}\label{F.L}
\mathcal{L}u(x)=(-\Delta )_{p(x)}^{s}u(x)=2\lim_{\epsilon \downarrow 0}\int_{\mathbb{R}^{N}\backslash B_{\epsilon}(x)} \frac{|u(x)-u(y)|^{p(x,y)-2}(u(x)-u(y))}{|x-y|^{N+sp(x,y)}}\,dy,\;\;\; x\in \mathbb{R}^{N},
\end{equation}
where $B_{\epsilon}(x)$ denotes the ball of $\mathbb{R}^{N}$ centreted at $x\in \mathbb{R}^{N}$ and radius $ \epsilon >0.$ To our best knowledge, this operator was first introduced by \textit{Kaufmann}, \textit{Rossi} and \textit{Vidal} \cite{UK}, in which the authors extended the Sobolev spaces with variable exponents to the fractional case together with a compact embedding theorem. As an application, they proved the existence and uniqueness of weak solutions for the following fractional  $p(x)$-Laplacian problem 
\begin{equation*}\left\{
\begin{array}{llc}
(-\Delta)^{s}_{p(x)}u+|u|^{q(x)-2}u=f(x) & \text{in}\ & \Omega, \\
u =0 & \text{in} & \partial \Omega, \\
 \end{array}\right.
\end{equation*}
where $f\in L^{a(x)}(\Omega)$ for some $a(x)>1$. Throughout the paper, we will write  
$$ \bar{p}(x)=p(x,x), \;\;\forall x\in \overline{\Omega}.$$
Let us denote by $Q$ the set 
$$ Q= \mathbb{R}^{N}\times\mathbb{R}^{N} \backslash \mathcal{C}\Omega \times \mathcal{C}\Omega, \;\;\text{where}\;\; \mathcal{C}\Omega=\mathbb{R}^{N}\backslash \Omega.$$ 
In this paper, we are assuming the following conditions on the functions $p$ and $q$ : 
\begin{equation*}\label{e1}
2\leq p^{-}=\min_{(x,y)\in \overline{Q}}p(x,y)\leq p(x,y)\leq p^{+}=\max_{(x,y)\in\overline{Q}}p(x,y)< +\infty,
\leqno{(a_{1})}
\end{equation*}

\begin{equation*}\label{e2}
p\;\text{is symmetric, that is}, \,\; p(x,y)=p(y,x)\;\;\forall (x,y)\in \overline{Q},\leqno{(a_{2})}
\end{equation*}
\begin{equation*}\label{e3}
p^{+}< q^{-}=\min_{x\in \overline{\Omega}}q(x)\leq q(x)\leq q^{+}=\max_{x\in \overline{\Omega}}q(x)<\frac{p_{s}^{*}(x)}{2}+1,\leqno{(a_{3})}
\end{equation*}
\begin{equation*}\label{e4}
sp^{+}< N, \;\;\,\leqno{(a_{4})}
\end{equation*}
where $p^{*}_{s}(x)=\frac{N\bar{p}(x)}{N-s\bar{p}(x)}$ is the critical exponent in the fractional Sobolev inequality.\par 

We refer the reader to (\cite{Bh}, \cite{Ez}, \cite{DN}) and the references therein for recent results on the fractional $p(x)$-Laplacian and for further details on the fractional Sobolev spaces with variable exponents. We remark that the operator $(-\Delta)_{p(x)}^{s}u$ is a fractional version of the $p(x)$-Laplacian operator, given by $\text{div}\left( |\nabla u|^{p(x)-2}\nabla u\right)$ associated with the variable exponent Sobolev space.

It worth pointing out that the study of partial differential equations involving $p(x)-$growth conditions has attracted the attention of researchers in recent years. The interest in studying such problems relies not only on mathematical purpose but also on their significance on real models, as explained in (\cite{radrep}, \cite{T.C}, \cite{LD}) and the references therein. 
 In particular, in the theory of elasticity and in mechanics of fluids, more precisely, in fluids of electrorheological type, whose equation of motion is given by 
  $$ u_{t}+\text{div}S(u)+u.\nabla u=f-\nabla \pi,$$ 
  where $u :\mathbb{R}^{3+1}\rightarrow \mathbb{R}$ is the velocity of the fluid at a point in space-time, $\nabla=(\partial_{1},\partial_{2},\partial_{3} )$ the gradient operator,  $\pi :\mathbb{R}^{3+1}\rightarrow \mathbb{R}$ the pressure, $f :\mathbb{R}^{3+1}\rightarrow \mathbb{R}$ represent external forces and $S$ is the stress tensor $S : W_{loc}^{1,1}\rightarrow \mathbb{R}^{3\times 3}$ given by 
  $$ S(u)(x)=\mu(x)\left(1+|D(u(x))|^{\frac{p(x)-2}{2}}\right)D(u(x)),$$
  where $D(u)=\frac{1}{2}\left( \nabla u+(\nabla u)^{T}\right)$ is the symmetric part of the gradient of $u$. Note that if $p(x)=2$, then this equation reduces to the usual Nevier-Stokes equation. If $s=1$, then (\ref{eq1}) reduces to the following problem :
  \begin{equation*}\left\{
  \begin{array}{llc}
  u_{t}-\text{div}(|\nabla u|^{p(x)-2}\nabla u)=|u|^{q(x)-2}u & \text{in}\ & \Omega,\;t>0 , \\
  u =0 & \text{in} & \partial\Omega,\;t > 0, \\
  u(x,0)=u_{0}(x), & \text{in} &\Omega , 
  \end{array}\right.\leqno{(P_{1})}
  \end{equation*}
 In \cite{GA}, by using the sub-differential approach, \textit{Akagi} and \textit{Mastsuura} obtained the well-posedness of solutions for $(P_{1})$ with $f(x,t)$ instead of $|u|^{q(x)-2}u$. Moreover, the large-time behavior of solutions also are considered. We refer the reader to see the papers (\cite{Alv}, \cite{Mes}, \cite{Akg}, \cite{X.L}, \cite{LD1}) and the references therein for some results on global existence and blow-up of solutions for problem $(P_{1})$. In \cite{Ant} \textit{Antontsev}, \textit{Chipot} and \textit{shmarev}, considered the doubly nonlinear parabolic equation with anisotropic variable exponent 
 $$ u_{t}-\text{div}(a(x,t,u)|u|^{\alpha(x,t)}|\nabla u|^{p(x,t)-2}\nabla u)=f(x,t),\;\;\text{in}\; \Omega\times (0,T).$$
 and established conditions on the data which guarantee the comparison principle and uniqueness of bounded weak solutions in suitable Orlicz-Sobolev spaces. If $p(x,y)=p$ and $q(x)=q$, then the problem (\ref{eq1}) becomes 
 \begin{equation*}\left\{
 \begin{array}{llc}
 u_{t}+(-\Delta)_{p}^{s}u=|u|^{q-2}u & \text{in}\ & \Omega,\;t>0 , \\
 u =0 & \text{in} & \mathbb{R}^{N}\backslash\Omega,\;t > 0, \\
 u(x,0)=u_{0}(x), & \text{in} &\Omega , 
 \end{array}\right.\leqno{(P_{2})}
 \end{equation*}
 where $(-\Delta)_{p}^{s}$ is the fractional $p$-Laplacian which is nonlinear nonlocal operator defined on smooth functions by 
 \begin{eqnarray*}
 (-\Delta)_{p}^{s}\varphi(x) &=& 2\lim_{\epsilon \downarrow 0} \int_{\mathbb{R}^{N}\backslash B_{\epsilon}(x)}\frac{|\varphi(x)-\varphi(y)|^{p-2}(\varphi(x)-\varphi(y))}{|x-y|^{N+sp}}\,dy.
 \end{eqnarray*} 
Note that the operator $(-\Delta)_{p}^{s}$ is degenerate when $p>2$ and singular when $1<p<2$. In the case when $p=2$,  \textit{Applebaum} in \cite{J.B} stated that the fractional Laplacian operator of the form $(-\Delta)^{s}$, $0<s<1$, is an infinitesimal generator of stable L\'evy processes. For more details on the fractional Laplacian, see for example (\cite{DV}, \cite{Cl}, \cite{L.C}, \cite{Gi}) and the references therein. 

We point out that in the last years many authors have obtained important results on the fractional $p-$Laplacian in bounded or unbounded domains, for example see  (\cite{Jac}, \cite{Jun}, \cite{Wr}, \cite{Wrr}, \cite{Jac}) and the references therein. \textit{Maz\'on}, \textit{Rossi} and \textit{Toledo} \cite{JMM} considered a model of fractional diffusion involving a nonlocal version of the $p$-Laplacian operator
$$ u_{t}+(-\Delta)_{p}^{s}u=0,\;\;\text{in}\;\;\Omega,\;t>0,\leqno{(P_{3})}$$
where $\Omega \subseteq \mathbb{R}^{N}$, $N\geq 1$, $p\in (1,\infty)$ and $s\in (0,1)$. The authors obtained the existence and uniqueness of strong solutions of the equation $(P_{3})$ by using the sub-differential approach. Moreover, the large-time behavior of solutions also are considered. It is proved, also, that when $s\rightarrow 1^{-}$ with $p\neq 2$ the equation $(P_{3})$ reduces to the well-known $p$-Laplacian evolution equation $u_{t}-\Delta_{p}u=0,$ after inserting a normalizing constant. With the help of potential well theory, \textit{Fu} and \textit{Pucci} \cite{FU}, studied the existence of global weak solutions and established the vacuum isolating and blow-up of strong solutions for the following class of problem 
  $$
  \left\{\begin{array}{l}
  u_{t}+ (-\Delta)^{s} u=|u|^{p-2}u, \;\; x\in \Omega, \; t>0, \\
  u(x,t)=0, \;\;\;\; x\in \mathbb{R}^{N}\backslash \Omega, \;t> 0, \\
  u(x,0)=u_{0}(x), \;\;\; x\in \Omega
  \end{array}\right.\leqno{(M_{2})}
  $$
  for $s\in (0,1)$, $N>2s$ and $2< p\leq 2_{s}^{*}=2N/(N-2s).$  In (\cite{Hng}, \cite{MII}) by combining the Galerkin method with the potential well theory, the authors have studied  the existence of global weak solutions for the degenerate Kirchhoff-type diffusion problems involving fractional Laplacian. Moreover, they obtained also estimates for the lower and upper bounds of the blow-up time.
  
 To the best of our knowledge, there are a few papers that dealt with the existence of elliptic equations involving fractional $p(x)$-Laplacian, see for example (\cite{UK}, \cite{Bh}, \cite{Ez}, \cite{DN}) and the references therein where the authors have used the direct method of calculus of variation and mountain pass theorem to study the existence and multiplicity of solutions.

However, to the author best knowledge, there are no papers to deal with the global existence and blow-up results for problem like (\ref{eq1}). Inspired by the above works, in the present manuscript, we study the existence of global solutions that vanish at infinity or solutions that blow-up in finite time for problem (\ref{eq1}) at high initial energy level.
 
The main difficulty of this problem arises from the fact of working with this new nonlocal fractional $p(x)$-Laplacian operator involving variable exponents.

The rest of paper is organized as follows. In section $2$, we state the main results of this paper. In section $3$, we introduce some important preliminary results which we require throughout the paper. In section $4$, we present some properties involving the functional $E$ restricts to the Nehari manifold $\mathcal{N}$. In sections $5$, via the sub-differential approach we prove the existence of local solutions to the problem (\ref{eq1}). In section $6$, we show the existence of global solutions by combining the potential well theory with the Nehari manifold. Moreover, we show that these solutions vanish at infinity in some $L^{r(.)}(\Omega)-$spaces. In section $7$, by virtue of a differential inequality technique, we prove that the local solutions blow-up in finite time with arbitrary negative initial energy and suitable initial values. 
\section{Main results } 
In this section, we will present the main results of this paper. The energy functional $E : W_{0}\rightarrow \mathbb{R}$ associated with problem $\eqref{eq1}$ is given by 
\begin{equation}\label{eqq1}
E(u) =\int_{Q}\frac{1}{p(x,y)} \frac{|u(x,t)-u(y,t)|^{p(x,y)}}{|x-y|^{N+sp(x,y)}}\,dxdy-\int_{\Omega} \frac{1}{q(x)}|u|^{q(x)}\,dx,
\end{equation}
where the space $W_{0}$ will be introduced in section $3$. By using $(a_{3})$ and theorem \ref{th1} one can verify that $E$ is of calss $C^{1}(W_{0},\mathbb{R})$ and 

$$ E'(u)u=I(u)=\int_{Q}\frac{|u(x,t)-u(y,t) |^{p(x,y)}}{|x-y|^{N+sp(x,y)}}\,dxdy -\int_{\Omega} |u|^{q(x)}\,dx,\quad \quad u\in W_{0}.$$

The potential well associated with problem $\eqref{eq1}$ is the set 
$$ \mathcal{W}:=\{u\in W_{0},\;E(u)< d, \; I(u)> 0\}\cup \{0\}, $$
where $d$ is the depth of the potenial well.
The exterior of the potential well is the set 
$$ Z:=\{u\in W_{0},\;E(u)< d, \; I(u)< 0\}.$$
Related to the functional $E$, we have the well-known Nehari manifold
$$\mathcal{N}=\{u\in W_{0}\backslash \{0\}:\; I(u)=0\}.$$ We define 
\begin{equation}\label{s1}
d=\inf_{u\in \mathcal{N}} E(u).
\end{equation}
It is important to point out that the potential well method was introduced in (\cite{DHH}) to obtain the global existence for nonlinear hyperbolic equations. The most important and typical work on the potential well is due to Payne and Sattinger in \cite {Py} where the authors have studied the initial boundary value problem of semilinear hyperbolic equations and semilinear parabolic equations.
We refer the reader to \cite{I}, where the potential well method was extended to obtain global existence and nonexistence results for the parabolic equations. To state the main results, we need the following two definitions.
\begin{definition}(Strong solution)\label{de1}
	A function $u\in C([0,T],H)$ will be called a strong solution of $\eqref{eq1}$ in $[0,T]$ if 
	\begin{enumerate}
		\item $\frac{du}{dt}\in L^{2}(0,T, H), $
		\item there exist functions $f, g\in L^{2}(0,T,H)$ such that $\frac{du(t)}{dt}+f(t)-g(t)=0, $ $f(t)\in \partial \varphi(u(t))$, $g(t)\in \partial \phi(u(t))$ a.e in $[0,T]$.
		\item the initial condition in $\eqref{eq1}$ is satisfied. 
	\end{enumerate}
\end{definition}
\begin{definition}(Maximal existence time)\label{de2} Let $u(t)$ be a strong solution of problem $\eqref{eq1}$. We define the maximal existence time $T_{\max}$ of $u$ as follows : 
	\begin{enumerate}
		\item If $u$ exists for all $ 0\leq t< +\infty$, then $T_{\max}=+\infty$; 
		\item If there exits a $t_{0}\in (0, +\infty)$ such that $u$ exits for $0\leq t< t_{0}$, but does not exist at $t=t_{0}$, then $T_{\max}=t_{0}$.
	\end{enumerate}
	
\end{definition}
Based on the above preparations, the main results of this paper are the following theorems.
\begin{theorem}\label{th11}
	Let $u_{0}\in W_{0}$ and the assumptions $(a_{1})-(a_{4})$ hold. Then there exists a postive constant $T>0$ such that the problem $\eqref{eq1}$ has a strong solution $u(x,t)$ on $\Omega \times [0,T]$ in the sense of definition \ref{de1}. Furthermore, $u(x,t)$ satisfies the energy equality  
	\begin{equation}\label{en1}
	\int_{0}^{t}\|u_{s}(s)\|_{2}^{2}\,ds+E(u(t))= E(u_{0}),\;\; t\in [0,T].
	\end{equation}
\end{theorem}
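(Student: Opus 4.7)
The plan is to reformulate \eqref{eq1} as an abstract Cauchy problem in the Hilbert space $H=L^{2}(\Omega)$ governed by a subdifferential operator, perturbed by a locally Lipschitz source, and then apply a contraction-mapping argument on a small time interval. Concretely, I would introduce the convex functional
\[
\varphi(u)=\int_{Q}\frac{1}{p(x,y)}\frac{|u(x)-u(y)|^{p(x,y)}}{|x-y|^{N+sp(x,y)}}\,dx\,dy
\]
on $W_{0}$, extended by $+\infty$ on $H\setminus W_{0}$. Using $(a_{1})$--$(a_{2})$ together with convexity of $t\mapsto t^{p(x,y)}$ and Fatou's lemma, I would check that $\varphi$ is proper, convex and lower semicontinuous on $H$. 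A direct computation (using the symmetry of the kernel) gives that its subdifferential $\partial\varphi$ acts as $(-\Delta)_{p(x)}^{s}$ in the duality sense, with domain contained in $W_{0}$. The nonlinear right-hand side is encoded by $\phi(u)=\int_{\Omega}\frac{1}{q(x)}|u|^{q(x)}\,dx$, which is of class $C^{1}$ on $W_{0}$ by $(a_{3})$--$(a_{4})$ and the embedding $W_{0}\hookrightarrow L^{q(x)}(\Omega)$, with derivative $\phi'(u)=|u|^{q(x)-2}u\in H$.

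Next, I would run a Banach fixed point scheme. For $R>0$ large enough compared to $\|u_{0}\|_{H}$ and $\varphi(u_{0})$, and for $T>0$ to be chosen small, define the set
\[
K_{R,T}=\{v\in C([0,T];H)\,:\, v(0)=u_{0},\ \|v(t)\|_{H}\le R\text{ for all }t\in[0,T]\}.
\]
Given $v\in K_{R,T}$, by Brezis' theorem on evolution equations driven by a maximal monotone operator with $L^{2}$-data, the linear problem
\[
\frac{du}{dt}+\partial\varphi(u)\ni |v|^{q(x)-2}v,\quad u(0)=u_{0},
\]
admits a unique strong solution $u\in C([0,T];H)$ with $u_{t}\in L^{2}(0,T;H)$ and $t\mapsto\varphi(u(t))$ absolutely continuous; the condition $(a_{3})$ and the Sobolev embedding ensure $|v|^{q(x)-2}v\in L^{2}(0,T;H)$ when $v\in L^{\infty}(0,T;W_{0})$, and a truncation of $\phi'$ at level $R$ (or a direct estimate via the embedding) makes this map well defined on $K_{R,T}$. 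Testing the equation against $u$ and against $u_{t}$ produces the a priori bounds that keep the solution inside $K_{R,T}$ and ensure that the map $v\mapsto u$ is a contraction in $C([0,T];H)$ for $T$ small; the fixed point is the desired strong solution on $[0,T]$.

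The main obstacle will be controlling the nonlinear term $|v|^{q(x)-2}v$ in $L^{2}(0,T;H)$ and verifying that the subdifferential $\partial\varphi$ genuinely represents $(-\Delta)_{p(x)}^{s}$; the variable exponent prevents a direct appeal to standard monotone-operator identifications, and one has to argue via the modular functional and the density of smooth functions in $W_{0}$, combined with the compact embedding provided by the fractional Sobolev theorem referenced in the paper. Once the local solution is in hand, the energy equality \eqref{en1} follows by testing the inclusion with $u_{t}\in L^{2}(0,T;H)$: the chain rule for subdifferentials of convex lower semicontinuous functionals along absolutely continuous curves yields $\frac{d}{dt}\varphi(u(t))=\langle f(t),u_{t}(t)\rangle$, while the $C^{1}$ regularity of $\phi$ gives $\frac{d}{dt}\phi(u(t))=\langle g(t),u_{t}(t)\rangle$, and integrating the resulting identity $\|u_{t}\|_{H}^{2}+\frac{d}{dt}E(u)=0$ over $[0,t]$ produces \eqref{en1}.
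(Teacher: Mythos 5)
Your functional-analytic setup coincides with the paper's: the same pair $\varphi,\phi$ on $H=L^{2}(\Omega)$, the identification $\partial\varphi(u)=(-\Delta)^{s}_{p(x)}u$ via maximal monotonicity, and the derivation of the energy equality \eqref{en1} from the chain rule for subdifferentials (the paper's Lemma \ref{lem2}, from \cite{H}) are all exactly what the paper does. The difference, and the problem, is the existence mechanism. Your Banach fixed-point scheme has a genuine gap: since $q^{-}>p^{+}\geq 2$, the map $v\mapsto |v|^{q(x)-2}v$ is superlinear and is \emph{not} locally Lipschitz from $L^{2}(\Omega)$ to $L^{2}(\Omega)$. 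The natural estimate
\[
\bigl\| |v_{1}|^{q(x)-2}v_{1}-|v_{2}|^{q(x)-2}v_{2}\bigr\|_{2}\leq C\,\bigl\|(|v_{1}|+|v_{2}|)^{q(x)-2}\bigr\|_{L^{a(\cdot)}}\,\|v_{1}-v_{2}\|_{L^{b(\cdot)}},\qquad \tfrac1a+\tfrac1b=\tfrac12,
\]
forces either $b=2$ and $a=\infty$ (an $L^{\infty}$ bound on $v_{1},v_{2}$, unavailable since $sp^{+}<N$ kills the embedding $W_{0}\hookrightarrow L^{\infty}$) or $b=2(q-1)>2$, which the metric of $C([0,T];H)$ does not control. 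Interpolating $L^{2(q^{+}-1)}$ between $L^{2}$ and $L^{r}$ (with $r<p_{s}^{*}$ supplied by a uniform $W_{0}$-bound on $K_{R,T}$) only yields H\"older continuity with exponent $\theta<1$ of the solution map, and a $\theta$-H\"older map with small coefficient is not a contraction: the Picard increments $a_{n}$ satisfy $a_{n}\leq (CT)a_{n-1}^{\theta}$ and stabilize at $(CT)^{1/(1-\theta)}>0$ rather than tending to $0$. Your fallback of truncating $\phi'$ at level $R$ does restore global Lipschitzness, but removing the truncation requires an $L^{\infty}$ estimate on the solution, which is not available for data merely in $W_{0}$ and is nowhere established in this setting. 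Note also that $K_{R,T}$ as you define it (an $L^{2}$-ball only) does not even make the frozen right-hand side lie in $L^{2}(0,T;H)$; you would have to build the $\varphi$-sublevel constraint into the set and re-verify invariance.

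The paper avoids all of this by not freezing the nonlinearity at all: it applies Ishii's theorem (Theorem \ref{th33}, from \cite{I}) for evolution inclusions of the form $u'+\partial\varphi(u)-\partial\phi(u)\ni 0$, whose hypotheses trade Lipschitz continuity for compactness. Concretely, the paper checks (i) that every sublevel set $D(\varphi,r)$ is compact in $L^{2}(\Omega)$, which follows from the compact embedding of Theorem \ref{th1}; (ii) that $D(\varphi)\subset D(\phi)$; and (iii) that $\{(\partial\phi)^{0}(u):u\in D(\varphi,r)\}$ is bounded in $L^{2}(\Omega)$ --- this last point is precisely where the upper restriction in $(a_{3})$, namely $q^{+}<\frac{p_{s}^{*}(x)}{2}+1$, i.e.\ $2(q(x)-1)<p_{s}^{*}(x)$, is used, to get $\int_{\Omega}|u|^{2(q(x)-1)}\,dx\leq C\,\sigma^{+}([u]_{s,p(x,y)})$. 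If you want to keep a fixed-point flavor rather than cite Ishii, the viable repair is a compactness argument (Schauder, using an Aubin--Lions-type bound from $u\in L^{\infty}(0,T;W_{0})$, $u_{t}\in L^{2}(0,T;H)$, plus continuity of the solution map), not Banach's contraction principle.
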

\begin{theorem}\label{th3}
	Let $u_{0}\in W_{0}$ and the assumptions $(a_{1})-(a_{4})$ hold. Suppose that 
	\begin{equation}\label{CN}
	E(u_{0})< d\quad\text{and}\quad I(u_{0})> 0.
	\end{equation}
	Then the problem $\eqref{eq1}$ admits a global strong solution such that 
	\begin{equation}
	u(t)\in \mathcal{W}\;\;\text{for}\;\; 0\leq t< \infty,
	\end{equation}
	and satisfying the energy equality  
	\begin{equation}\label{eng1}
	\int_{0}^{t}\|u_{s}(s)\|_{2}^{2}\,ds+E(u(t))= E(u_{0}), \; a.e.\; t\geq 0.
	\end{equation}
	Moreover, 
	\begin{equation}\label{Dec}
	\|u(t)\|_{r(x)}\rightarrow 0,\;\text{as}\; t\rightarrow +\infty,\;\forall r\in (1, p_{s}^{*}).
	\end{equation}
\end{theorem}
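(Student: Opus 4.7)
The plan is to combine the local existence result (Theorem \ref{th11}) with the invariance of the potential well $\mathcal{W}$ under the flow, then extract the asymptotic decay from a Lyapunov-type argument built on the energy equality \eqref{eng1}.

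\textbf{Step 1 (Invariance of $\mathcal{W}$).} Let $u$ be the local strong solution on $[0,T]$ provided by Theorem \ref{th11}. By the energy equality, $E(u(t))\le E(u_{0})<d$ for every $t\in[0,T]$. I claim $u(t)\in\mathcal{W}$ on the whole maximal existence interval $[0,T_{\max})$. Suppose instead there is a first time $t^{\ast}\in(0,T_{\max})$ at which $I(u(t^{\ast}))=0$. If $u(t^{\ast})\ne 0$, then $u(t^{\ast})\in\mathcal{N}$, so $E(u(t^{\ast}))\ge d$ by \eqref{s1}, contradicting $E(u(t^{\ast}))<d$. The case $u(t^{\ast})=0$ is ruled out by a uniform lower bound on $\mathcal{N}$: a standard modular vs.\ norm argument together with the embedding of $W_{0}$ into $L^{q(x)}(\Omega)$ (Theorem \ref{th1} plus $(a_{3})$) yields some $\delta>0$ such that every $v\in\mathcal{N}$ satisfies $\|v\|_{W_{0}}\ge\delta$, so $u(t^{\ast})$ cannot vanish while remaining on $\mathcal{N}$ at the first contact time.

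\textbf{Step 2 (A priori bound and globality).} On $\mathcal{W}$ one has the standard potential well estimate
\begin{equation*}
E(u)-\frac{1}{q^{-}}I(u)\ge\left(\frac{1}{p^{+}}-\frac{1}{q^{-}}\right)\int_{Q}\frac{|u(x)-u(y)|^{p(x,y)}}{|x-y|^{N+sp(x,y)}}\,dx\,dy,
\end{equation*}
so $\int_{Q}\frac{|u(t,x)-u(t,y)|^{p(x,y)}}{|x-y|^{N+sp(x,y)}}dx\,dy\le\frac{p^{+}q^{-}}{q^{-}-p^{+}}d$ uniformly in $t$. Converting modular into norm via $(a_{1})$ gives $\sup_{t<T_{\max}}\|u(t)\|_{W_{0}}<\infty$. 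Combined with the energy inequality bound on $\int_{0}^{t}\|u_{s}\|_{2}^{2}\,ds$, this a priori bound allows the local existence scheme of Theorem \ref{th11} to be iterated on $[T_{\max}-\varepsilon,T_{\max}+\delta]$ whenever $T_{\max}<\infty$, contradicting maximality. Hence $T_{\max}=+\infty$, and the energy equality \eqref{eng1} follows from extending \eqref{en1} by concatenation on each subinterval.

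\textbf{Step 3 (Decay in $L^{r(x)}$).} Testing the equation against $u$ (justified by the strong-solution regularity in Definition \ref{de1}) gives
\begin{equation*}
\tfrac{1}{2}\tfrac{d}{dt}\|u(t)\|_{2}^{2}=-I(u(t))\le 0.
\end{equation*}
Using the same potential well computation as in Step 2, together with the fractional Sobolev–Poincaré inequality $\|u\|_{2}\le C\|u\|_{W_{0}}$, I will produce a modular estimate of the form $I(u(t))\ge c\min(\|u(t)\|_{2}^{p^{+}},\|u(t)\|_{2}^{p^{-}})$ valid as long as $u(t)\in\mathcal{W}$. Feeding this into the ODE inequality above and solving yields $\|u(t)\|_{2}\to 0$ as $t\to\infty$. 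Decay in $L^{r(x)}(\Omega)$ for $r\in(1,p_{s}^{\ast})$ is then obtained by interpolating between the $L^{2}$–decay and the uniform $W_{0}$–bound from Step 2, via the embedding $W_{0}\hookrightarrow L^{r(x)}(\Omega)$ and Hölder's inequality for variable exponent spaces.

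\textbf{Main obstacle.} The hard point is Step 3: because $p$ and $q$ are variable, the modular $\int_{Q}|\cdot|^{p(x,y)}$ does not scale homogeneously, so the passage from $I(u)\ge 0$ to a quantitative lower bound $I(u)\ge c\|u\|_{2}^{\gamma}$ requires carefully splitting into the regimes $\|u(t)\|_{W_{0}}\ge 1$ and $\|u(t)\|_{W_{0}}\le 1$ and invoking the correct modular–norm comparison in each, while ensuring that the relevant Sobolev constants stay compatible with the potential-well strict inequality $\int_{\Omega}|u|^{q(x)}<\int_{Q}\cdots$.
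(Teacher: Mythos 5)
Your Steps 1 and 2 follow the paper's own proof almost verbatim: the well-invariance comes from the energy equality \eqref{en1} together with a first-contact/intermediate-value argument against $d=\inf_{\mathcal{N}}E$ in \eqref{s1}, and globality comes from the uniform modular bound $\left(\frac{1}{p^{+}}-\frac{1}{q^{-}}\right)\rho_{s,p(x,y)}(u(t))<d$ obtained from \eqref{ea2}; the paper is equally brisk about the degenerate possibility $u(t)=0$, so no complaint there.

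The genuine gap is in Step 3, and it is the one you flag yourself. The announced estimate $I(u(t))\geq c\min\left(\|u(t)\|_{2}^{p^{+}},\|u(t)\|_{2}^{p^{-}}\right)$ ``valid as long as $u(t)\in\mathcal{W}$'' is false as stated: take the extremal $w\in\mathcal{N}$ with $E(w)=d$ produced by the paper's minimization lemma and set $u_{\lambda}=\lambda w$ with $\lambda\uparrow 1$. The uniqueness computation in Lemma \ref{lem4} shows that $h(t)=E(tw)$ has its only critical point at $t=1$, so for $\lambda\in(0,1)$ one has $I(\lambda w)=\lambda h'(\lambda)>0$ and $E(\lambda w)<E(w)=d$, i.e. $u_{\lambda}\in\mathcal{W}$, while $I(u_{\lambda})\to I(w)=0$ and $\|u_{\lambda}\|_{2}\to\|w\|_{2}>0$; hence no constant $c>0$ can work on $\mathcal{W}$ alone. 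Any coercivity of this kind must exploit the strict uniform gap $E(u(t))\leq E(u_{0})<d$, and in the variable-exponent setting extracting from that gap a $\theta<1$ with $\rho_{q(x)}(u(t))\leq\theta\,\rho_{s,p(x,y)}(u(t))$ is precisely the nontrivial work your ``main obstacle'' paragraph defers and never carries out; the subsequent variable-exponent interpolation up to $p_{s}^{*}(x)$ is likewise only sketched. The paper avoids all of this by a softer, genuinely different argument: from $\sup_{t}\rho_{s,p(x,y)}(u(t))<\infty$ and the compact embedding $W_{0}\hookrightarrow L^{r(.)}(\Omega)$ it invokes \cite[Theorem 4.3.3]{D} to conclude that the $\omega$-limit set of $u_{0}$ in the $L^{r(.)}(\Omega)$ topology is nonempty and consists of equilibria; a nontrivial equilibrium $v$ satisfies $I(v)=0$, hence $E(v)\geq d$ by \eqref{s1}, whereas along $u(t_{n})\rightharpoonup v$ in $W_{0}$ (with strong $L^{q(.)}(\Omega)$ convergence from compactness) lower semicontinuity gives $E(v)\leq\liminf_{n} E(u(t_{n}))<d$; thus $\omega(u_{0})=\{0\}$ and \eqref{Dec} follows directly in every $L^{r(.)}(\Omega)$, with no differential inequality and no interpolation. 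If you wish to keep your Lyapunov route, you must actually prove the gap-dependent coercivity, e.g. by a compactness/contradiction argument using the weak lower semicontinuity of $E-\frac{1}{q^{-}}I$ together with the fibering map of Lemma \ref{lem4}; as written, Step 3 does not constitute a proof of \eqref{Dec}.
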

\begin{theorem}\label{th4}
	Let $u_{0}\in Z$ and the assumptions $(a_{1})-(a_{4})$ hold. Then 
	\begin{equation}
	u(t)\in Z, \;\forall t\in [0, T_{\max}), 
	\end{equation}
	and $u$ satisfies the energy equality 
	\begin{equation}\label{INN}
	\int_{0}^{t}\|u_{s}(s)\|^{2}_{2}\,ds+E(u(t)) =E(u_{0}), \;\;\;\forall t\in[0,T_{\max}).
	\end{equation}
	Moreover if $E(u_{0})< 0$, then $T_{\max}< +\infty$.	
\end{theorem}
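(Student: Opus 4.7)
The plan is to prove the three claims in order. \textbf{Invariance of $Z$.} From the energy equality \eqref{INN} (which is inherited from Theorem \ref{th11} on each compact subinterval of $[0,T_{\max})$), $E(u(t))\le E(u_{0})<d$ throughout $[0,T_{\max})$; so $u(t)$ can only leave $Z$ by crossing the face $\{I=0\}$. Suppose for contradiction that $t^{*}\in(0,T_{\max})$ is the first time at which $I(u(t^{*}))=0$ while $I(u(t))<0$ for $t\in[0,t^{*})$. If $u(t^{*})\neq 0$, then $u(t^{*})\in\mathcal{N}$, giving $E(u(t^{*}))\ge d$ and contradicting the energy bound. To exclude $u(t^{*})=0$, I invoke the Sobolev-type embedding of $W_{0}$ into $L^{q(\cdot)}(\Omega)$ together with the modular/norm relations in variable-exponent spaces: on $[0,t^{*})$, the strict inequality $\int_{Q}(\cdots)\,dx\,dy<\int_{\Omega}|u(t)|^{q(x)}\,dx$ combined with the condition $p^{+}<q^{-}$ forces a positive $t$-uniform lower bound on $\|u(t)\|_{W_{0}}$, which by continuity rules out $u(t^{*})=0$.

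For the \textbf{finite-time blow-up} under $E(u_{0})<0$, set $F(t):=\|u(t)\|_{2}^{2}$ and $H(t):=-E(u(t))$. Testing \eqref{eq1} against $u$ gives $F'(t)=-2I(u(t))$, while \eqref{INN} gives $H'(t)=\|u_{t}(t)\|_{2}^{2}\ge 0$, so $H(t)\ge H(0)=-E(u_{0})>0$. The key algebraic step, extracting $I$ from a suitable combination of $E$ and $\int|u|^{q(x)}\,dx$ and using the bounds in $(a_{1})$ and $(a_{3})$, is
\[
-I(u(t))\ \ge\ p^{+}H(t)\ +\ \Bigl(1-\tfrac{p^{+}}{q^{-}}\Bigr)\int_{\Omega}|u(t)|^{q(x)}\,dx,
\]
both terms being strictly positive by $E(u_{0})<0$ and $p^{+}<q^{-}$.

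To upgrade this to finite-time blow-up, I carry out a Levine-type concavity argument. Define
\[
\Psi(t):=\int_{0}^{t}\|u(s)\|_{2}^{2}\,ds+(T_{0}-t)\|u_{0}\|_{2}^{2}+\beta(t+\tau)^{2}
\]
with parameters $T_{0},\beta,\tau>0$ to be tuned. Using the identity $\int_{0}^{t}\|u_{s}\|_{2}^{2}\,ds=H(t)-H(0)$ (from \eqref{INN}), Cauchy--Schwarz applied to $\int_{0}^{t}\!\int_{\Omega}u\,u_{s}\,dx\,ds$, and the displayed lower bound on $-I(u)$, I expect to obtain
\[
\Psi(t)\,\Psi''(t)-(1+\sigma)\bigl(\Psi'(t)\bigr)^{2}\ \ge\ 0
\]
for some $\sigma>0$ of order $(q^{-}-p^{+})/p^{+}$. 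This forces $(\Psi^{-\sigma})''\le 0$; choosing $\tau$ large enough so that $(\Psi^{-\sigma})'(0)<0$, concavity drives $\Psi^{-\sigma}$ to $0$ in finite time, so $\Psi$, and hence $F$, blows up, contradicting $T_{\max}=+\infty$.

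The main obstacle is the Levine estimate itself in the variable-exponent setting: the clean constant-exponent identity of the form $pE(u)=\int_{Q}(\cdots)-(p/q)\int|u|^{q}\,dx$ is unavailable here, and one must work throughout with the two-sided bounds $p^{-}\le p(x,y)\le p^{+}$ and $q^{-}\le q(x)\le q^{+}$ while still preserving the strict positivity of $1-p^{+}/q^{-}$. Balancing the $\beta(t+\tau)^{2}$ correction against the Cauchy--Schwarz term so that the coefficient in front of $(\Psi')^{2}$ strictly exceeds $1$ is the most delicate piece of bookkeeping, and ultimately relies on the sharp assumption $p^{+}<q^{-}$ in $(a_{3})$.
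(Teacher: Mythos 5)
Your handling of the invariance of $Z$ is essentially the paper's argument (the paper only says it is ``similar to that in the previous section''): energy monotonicity keeps $E(u(t))<d$, a first crossing time $t^{*}$ with $I(u(t^{*}))=0$ and $u(t^{*})\neq 0$ would put $u(t^{*})\in\mathcal{N}$ and force $E(u(t^{*}))\geq d$, a contradiction. Your extra step excluding $u(t^{*})=0$ — the uniform lower bound on $\|u(t)\|_{W_{0}}$ extracted from $I(u(t))<0$, the embedding constant $\Lambda$, the modular/norm relations and $p^{+}<q^{-}$ — is correct and in fact more careful than the paper, which leaves this implicit. Your key estimate $-I(u)\geq p^{+}H(t)+\bigl(1-\tfrac{p^{+}}{q^{-}}\bigr)\rho_{q(x)}(u)$ is exactly the paper's displayed inequality (modulo an evident typo there).

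The genuine issue is the blow-up step. The paper does not use a Levine concavity functional at all: since $E(u_{0})<0$, the estimate above already gives $\varphi'(t)\geq -p^{+}E(u_{0})+c_{1}\rho_{q(x)}(u(t))>0$ for $\varphi(t)=\tfrac12\|u(t)\|_{2}^{2}$, and then Proposition \ref{p1} together with the embedding $L^{q(\cdot)}(\Omega)\hookrightarrow L^{2}(\Omega)$ yields the first-order inequality $\varphi'\geq\min\{C^{q^{+}}\varphi^{q^{+}/2},C^{q^{-}}\varphi^{q^{-}/2}\}$; assuming $T_{\max}=+\infty$, $\varphi$ grows at least linearly, eventually exceeds $1$, and the superlinear ODE forces $\varphi^{1-q^{+}/2}$ to become negative in finite time — a contradiction, with no auxiliary $\Psi$, no $T_{0},\beta,\tau$ tuning. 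Your concavity plan, by contrast, is left unproven precisely at its decisive step (``I expect to obtain $\Psi\Psi''-(1+\sigma)(\Psi')^{2}\geq 0$''), and with your constants it actually fails in admissible cases: in the Levine computation, after Cauchy--Schwarz the coefficient of $\int_{0}^{t}\|u_{s}\|_{2}^{2}\,ds$ appearing in $\Psi''$ (via $H(t)=H(0)+\int_{0}^{t}\|u_{s}\|_{2}^{2}\,ds$) must dominate $4(1+\sigma)$, and your bound extracts only $2p^{+}\int_{0}^{t}\|u_{s}\|_{2}^{2}\,ds$, forcing $\sigma\leq (p^{+}-2)/2$, which is $0$ when $p\equiv 2$ (permitted by $(a_{1})$), whereas $\sigma>0$ is indispensable for the concavity of $\Psi^{-\sigma}$; your announced $\sigma\sim(q^{-}-p^{+})/p^{+}$ is not what the computation delivers. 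The repair is to take the $q^{-}$-combination instead of the $p^{+}$-one: from
\begin{equation*}
\rho_{q(x)}(u)\ \geq\ q^{-}\Bigl(\int_{Q}\frac{1}{p(x,y)}\frac{|u(x)-u(y)|^{p(x,y)}}{|x-y|^{N+sp(x,y)}}\,dxdy-E(u)\Bigr)
\end{equation*}
one gets $-I(u)\geq q^{-}H(t)+\bigl(\tfrac{q^{-}}{p^{+}}-1\bigr)\rho_{s,p(x,y)}(u)$, hence $\sigma=(q^{-}-2)/2>0$ since $q^{-}>p^{+}\geq 2$. With that fix your route closes, but it remains considerably heavier than the paper's direct ODE comparison, which is available exactly because $E(u_{0})<0$ (Levine-type functionals earn their keep only at nonnegative subcritical energy, which this theorem does not address).
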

 
\section{Preliminary Results}
In this section, we review definitions for the Lebesgue and fractional Sobolev spaces with variable exponent and some interesting properties of the fractional $p(x)$-Laplacian $\mathcal{L}$ that will be useful to discuss the problem (\ref{eq1}). Besides that, we recall some properties of the subdifferentials.
\subsection{Variable exponent Lebesgue spaces}
The basic properties of the variable exponent Lebesgue spaces can be found in  (\cite{LD}, \cite{J}, \cite{radrep}) and the references therein.\\
Throughout this subsection, without further mentioning, we always assume that $\Omega$ denotes an open set in $\mathbb{R}^{N}$ and $s\in (0,1)$. Denote 

$$ C^{+}(\Omega)=\{ h\in C(\overline{\Omega}),\; h(x)> 1\;\;\text{for all}\; x\in \overline{\Omega}\}.$$
For any $h\in  C^{+}(\Omega)$, we define 
$$ h^{-}=\min_{x\in \overline{\Omega}}h(x), \quad h^{+}=\max_{x\in \overline{\Omega}}h(x).$$
For every $h\in  C^{+}(\Omega)$, the variable exponent Lebesgue space $L^{h(.)}(\Omega)$ is defined by 
$$ L^{h(.)}(\Omega)=\left\{u:\Omega\rightarrow \mathbb{R},\; \exists \lambda> 0, \; \rho_{h(.)}\left( \frac{u}{\lambda}\right)< +\infty\right\},  $$
where the mapping $\rho_{h(.)}: L^{h(.)}(\Omega)\rightarrow \mathbb{R}$ defined by 
$$\rho_{h(.)} (u)=\int_{\Omega} |u(x)|^{h(x)}\,dx,$$
called the $h(.)$-modular of the Lebesgue space $L^{h(.)}(\Omega)$. The space $L^{h(.)}(\Omega)$ endowed with the Luxemburg norm, 
$$\|u\|_{h(.)}=\|u\|_{L^{h(.)}(\Omega)}=\inf\left\{\lambda > 0, \; \rho\left( \frac{u}{\lambda}\right)\leq 1\right\}, $$
is separable and reflexife Banach space. Note that, when $h$ is constant, the Luxemburg norm $\|\,.\,\|_{L^{h(.)}(\Omega)}$ coincide with the standard norm $\|\,.\,\|_{h}$ of the Lebesgue space $L^{h}(\Omega)$.
 
Denoting by $L^{h'(.)}(\Omega)$ the dual space of space  $L^{h(.)}(\Omega)$, where $\frac{1}{h(x)}+\frac{1}{h'(x)}=1$, for any $u\in L^{h(.)}(\Omega)$ and $v\in L^{h'(.)}(\Omega)$ we have the H\"older inequality
\begin{equation}\label{I1}
\int_{\Omega} |uv|\,dx\leq \left(\frac{1}{h^{-}}-\frac{1}{h^{+}}\right)\|u\|_{L^{h(.)}(\Omega)}\|v\|_{L^{h'(.)}(\Omega)},
\end{equation}
If $h_{1}(x), h_{2}(x)\in C^{+}(\overline{\Omega})$ and $h_{1}(x)\leq h_{2}(x)$ for all $x\in \overline{\Omega}$, then $L^{h_{2}(.)}(\Omega)\hookrightarrow L^{h_{1}(.)}(\Omega)$ and the embedding is continuous.\\
In the following proposition, we give some results regarding the relationship between the Luxemburg norm and the $h(.)$-modular mapping.
\begin{proposition}\label{p1}
	For $u\in L^{h(.)}(\Omega)	$ and $\{u_{n}\}_{n\in \mathbb{N}}\subset L^{h(.)}(\Omega)$, we have 
	\begin{enumerate}
		\item $\|u\|_{L^{h(.)}(\Omega)}\leq 1\Rightarrow \|u\|_{L^{h(.)}(\Omega)}^{h^{+}}\leq \rho_{h(.)}(u)\leq \|u\|_{L^{h(.)}(\Omega)}^{h^{-}}, $
		\item $\|u\|_{L^{h(.)}(\Omega)}	\geq1\Rightarrow \|u\|_{L^{h(.)}(\Omega)}^{h^{-}}\leq \rho_{h(.)}(u)\leq \|u\|_{L^{h(.)}(\Omega)}^{h^{+}}, $
		\item $\lim_{n\rightarrow +\infty}\|u_{n}\|_{L^{h(.)}(\Omega)}=0\Leftrightarrow \lim_{n\rightarrow +\infty}\rho_{h(.)}(u_{n})=0,$
		\item $\lim_{n\rightarrow +\infty}\|u_{n}\|_{L^{h(.)}(\Omega)}=+\infty\Leftrightarrow \lim_{n\rightarrow +\infty}\rho_{h(.)}(u_{n})=+\infty.$
	\end{enumerate}
\end{proposition}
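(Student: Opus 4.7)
The plan is to derive all four statements from a single pointwise scaling identity for $t\mapsto t^{h(x)}$, combined with the defining property of the Luxemburg norm that $\rho_{h(.)}(u/\lambda)\le 1$ iff $\lambda\ge\|u\|_{L^{h(.)}(\Omega)}$ (for $u\ne 0$). The elementary input is that for every real $t>0$ and every $x\in\overline{\Omega}$,
$$\min\{t^{h^{-}},t^{h^{+}}\}\le t^{h(x)}\le\max\{t^{h^{-}},t^{h^{+}}\},$$
so, setting $t=1/\lambda$, multiplying by $|u(x)|^{h(x)}$ and integrating over $\Omega$ yields the two-sided bracketing
$$\min(\lambda^{-h^{-}},\lambda^{-h^{+}})\,\rho_{h(.)}(u)\le\rho_{h(.)}(u/\lambda)\le\max(\lambda^{-h^{-}},\lambda^{-h^{+}})\,\rho_{h(.)}(u).$$

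For (1), set $\lambda:=\|u\|_{L^{h(.)}(\Omega)}\le 1$, so $1/\lambda\ge 1$, and the display above specialises to $\lambda^{-h^{-}}\rho_{h(.)}(u)\le\rho_{h(.)}(u/\lambda)\le\lambda^{-h^{+}}\rho_{h(.)}(u)$. Evaluating at $\lambda=\|u\|_{L^{h(.)}(\Omega)}$ and using $\rho_{h(.)}(u/\lambda)\le 1$ gives the upper half $\rho_{h(.)}(u)\le\lambda^{h^{-}}$. For the lower half I pick any $\mu\in(0,\lambda)$; by the infimum characterisation $\rho_{h(.)}(u/\mu)>1$, and $\mu<1$ combined with the scaling estimate yields $1<\mu^{-h^{+}}\rho_{h(.)}(u)$, hence $\rho_{h(.)}(u)>\mu^{h^{+}}$; letting $\mu\uparrow\lambda$ finishes (1). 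Assertion (2) is then obtained by the same dichotomy after noting that $\lambda\ge 1$ flips the max/min, so the bracketing becomes $\lambda^{-h^{+}}\rho_{h(.)}(u)\le\rho_{h(.)}(u/\lambda)\le\lambda^{-h^{-}}\rho_{h(.)}(u)$ and the roles of $h^{+}$ and $h^{-}$ interchange throughout the argument.

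Items (3) and (4) are immediate consequences of (1) and (2). For (3): if $\|u_n\|_{L^{h(.)}(\Omega)}\to 0$, then eventually $\|u_n\|_{L^{h(.)}(\Omega)}\le 1$ and (1) yields $\rho_{h(.)}(u_n)\le\|u_n\|_{L^{h(.)}(\Omega)}^{h^{-}}\to 0$; conversely, $\rho_{h(.)}(u_n)\to 0$ forces $\rho_{h(.)}(u_n)\le 1$ eventually, which via (2) forces $\|u_n\|_{L^{h(.)}(\Omega)}\le 1$ (otherwise the left bound in (2) would give $\rho_{h(.)}(u_n)\ge 1$), whence the right half of (1) delivers $\|u_n\|_{L^{h(.)}(\Omega)}^{h^{+}}\le\rho_{h(.)}(u_n)\to 0$. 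Part (4) is handled symmetrically: for $\|u_n\|\to\infty$ one lands in the regime of (2) and the left bound there gives $\rho_{h(.)}(u_n)\ge\|u_n\|_{L^{h(.)}(\Omega)}^{h^{-}}\to\infty$, while the converse uses (1) to rule out $\|u_n\|\le 1$ eventually. The one step that needs real care is the attainability claim $\rho_{h(.)}(u/\|u\|_{L^{h(.)}(\Omega)})\le 1$ used to evaluate the scaling estimate at $\lambda=\|u\|_{L^{h(.)}(\Omega)}$; I would justify it by monotone convergence along a sequence $\lambda_k\downarrow\|u\|_{L^{h(.)}(\Omega)}$ with $\rho_{h(.)}(u/\lambda_k)\le 1$, using that $|u(x)/\lambda_k|^{h(x)}$ is monotone non-decreasing in $k$ with pointwise limit $|u(x)/\|u\|_{L^{h(.)}(\Omega)}|^{h(x)}$. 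Beyond this measure-theoretic detail the proof is a purely computational comparison of $t^{h(x)}$ for $t\gtrless 1$ and poses no real obstacle.
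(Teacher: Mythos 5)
Your proof is correct. Note that the paper itself contains no proof of Proposition \ref{p1}: it is quoted as a standard fact about the Luxemburg norm, with the reader referred to \cite{LD}, \cite{J}, \cite{radrep}. Your argument --- the pointwise comparison $\min\{t^{h^{-}},t^{h^{+}}\}\le t^{h(x)}\le\max\{t^{h^{-}},t^{h^{+}}\}$ integrated against $|u(x)|^{h(x)}$, the infimum characterisation of the Luxemburg norm, and the monotone convergence argument for the attainment $\rho_{h(.)}\bigl(u/\|u\|_{L^{h(.)}(\Omega)}\bigr)\le 1$ --- is precisely the textbook proof found in those references, so there is no methodological divergence to report. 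Two micro-points you may wish to make explicit: in (2) the limiting step $\mu\uparrow\lambda$ has a boundary sub-case $\lambda=1$, where $\mu<1$ forces you back into the regime of (1) and yields $\rho_{h(.)}(u)\ge\lim_{\mu\uparrow 1}\mu^{h^{+}}=1=\lambda^{h^{-}}$, so the stated bound survives; and in the converse of (4), after ruling out $\|u_{n}\|_{L^{h(.)}(\Omega)}\le 1$ eventually, you should explicitly invoke the upper bound of (2), namely $\rho_{h(.)}(u_{n})\le\|u_{n}\|_{L^{h(.)}(\Omega)}^{h^{+}}$, to conclude $\|u_{n}\|_{L^{h(.)}(\Omega)}\ge\rho_{h(.)}(u_{n})^{1/h^{+}}\to+\infty$. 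Neither point affects correctness; both are one-line completions of steps you already gestured at. (The degenerate case $u=0$ in (1) is trivial and harmlessly omitted, as you noted by restricting the norm characterisation to $u\ne 0$.)
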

\begin{proposition}\label{p2}
	It holds that 
	$$	\sigma^{-}\left(\|u\|_{L^{h(.)}(\Omega)}\right)\leq \int_{\Omega} |u|^{h(x)}\,dx\leq\sigma^{+}\left(\|u\|_{L^{h(.)}(\Omega)}\right)$$
	for all $u\in L^{h(.)}(\Omega)$, with $\sigma^{-}(\tau):=\min(\tau^{h^{-}}, \tau^{h^{+}})$ and $\sigma^{+}(\tau):=\max(\tau^{h^{-}}, \tau^{h^{+}})$ for $\tau \geq 0$.
\end{proposition}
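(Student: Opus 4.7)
The plan is to reduce Proposition \ref{p2} to the two-sided modular estimates already recorded in Proposition \ref{p1}, via a case split on whether $\tau:=\|u\|_{L^{h(.)}(\Omega)}$ is $\leq 1$ or $\geq 1$. For $u=0$ the inequality is trivial, so I may assume $u\neq 0$, hence $\tau>0$. First I would record the elementary monotonicity observation that, because $1<h^{-}\leq h^{+}$, the map $\tau\mapsto \tau^{h^{-}}$ dominates $\tau\mapsto \tau^{h^{+}}$ on $(0,1]$ and is dominated by it on $[1,\infty)$; therefore
\[
\sigma^{-}(\tau)=\begin{cases}\tau^{h^{+}}, & 0<\tau\leq 1,\\ \tau^{h^{-}}, & \tau\geq 1,\end{cases}\qquad \sigma^{+}(\tau)=\begin{cases}\tau^{h^{-}}, & 0<\tau\leq 1,\\ \tau^{h^{+}}, & \tau\geq 1.\end{cases}
\]

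Next, I would treat the two cases separately. When $\tau\leq 1$, Proposition \ref{p1}(1) gives $\tau^{h^{+}}\leq \rho_{h(.)}(u)\leq \tau^{h^{-}}$, which is precisely $\sigma^{-}(\tau)\leq \rho_{h(.)}(u)\leq \sigma^{+}(\tau)$ by the identification above. When $\tau\geq 1$, Proposition \ref{p1}(2) gives $\tau^{h^{-}}\leq \rho_{h(.)}(u)\leq \tau^{h^{+}}$, which again coincides with $\sigma^{-}(\tau)\leq \rho_{h(.)}(u)\leq \sigma^{+}(\tau)$. Concatenating the two cases delivers the claimed two-sided bound for every $u\in L^{h(.)}(\Omega)$.

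I do not anticipate any real obstacle: Proposition \ref{p2} is essentially a unified repackaging of Proposition \ref{p1}, with the notation $\sigma^{\pm}$ absorbing the case distinction between small and large norms. The only minor point to mention is the seam at $\tau=1$, where both branches of each of $\sigma^{-}$ and $\sigma^{+}$ coincide and equal $1$, so the statement is unambiguous and either branch of the argument above covers that boundary value.
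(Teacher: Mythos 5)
Your proof is correct: the paper states Proposition \ref{p2} without proof, treating it as the evident unification of the two norm--modular estimates in Proposition \ref{p1}, and your case split at $\|u\|_{L^{h(.)}(\Omega)}\leq 1$ versus $\geq 1$, together with the identification of the branches of $\sigma^{\pm}$, is exactly that intended argument. The handling of $u=0$ and of the seam at $\tau=1$ is also fine.
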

\subsection{Fractional Sobolev spaces with variable exponent }
Let $p :\overline{\Omega} \times \overline{\Omega} \rightarrow (1, +\infty)$ be a continuous satisfies the conditions $(a_{1})-(a_{2})$. We define the fractional Sobolev space with variable exponent via the Gagliardo approach as follows :
$$ W=W^{s, p(x,y)}(\Omega)=\left\{
u\in L^{\bar{p}(.)}(\Omega), \;\; \int_{\Omega \times\Omega} \frac{|u(x)-u(y)|^{p(x,y)}}{\lambda^{p(x,y)}|x-y|^{N+sp(x,y)}}\,dxdy<+\infty, \;\text{for some}\; \lambda >0\right\}, $$
and we set 
$$ [u]_{s, p(x,y)}=\inf\left\{\lambda > 0, \; \int_{\Omega \times\Omega} \frac{|u(x)-u(y)|^{p(x,y)}}{\lambda^{p(x,y)}|x-y|^{N+sp(x,y)}}\,dxdy\leq 1\right\}, $$
the variable exponent Gagliardo-Slobodetskii seminorm. It is already known that (see \cite{UK}) $W$ is a Banach space  with the norm 
$$ \|u\|_{W}=\|u\|_{L^{\bar{p}(.)}(\Omega)}+[u]_{s,p(x,y)}, $$
\begin{theorem}[\cite{Ez}, Theorem 2.1]\label{th1} Let $\Omega $ be a smooth bounded domain in $\mathbb{R}^{N}$ and let $s\in (0,1)$. Let $p :\overline{\Omega}\times \overline{\Omega} \rightarrow (1, +\infty)$ be a continuous function satisfies the conditions $(a_{1})-(a_{2})$ and $sp(x,y)< N$ for all $(x,y)\in \overline{\Omega}\times \overline{\Omega}$. Let $r : \overline{\Omega} \rightarrow (1,+\infty)$ be a continuous function such that 
	$$ p_{s}^{*}(x)	=\frac{N\bar{p}(x)}{N-s\bar{p}(x)}> r(x)\geq r^{-}=\min_{x\in \overline{\Omega}}r(x)> 1\;\;\;\forall x\in \overline{\Omega}.$$
	Then, there exists a constant $C=C(N,s,p,r,\Omega)> 0$ such that, for any $u\in W$, 
	$$ \|u\|_{L^{r(.)}(\Omega)}	\leq C\|u\|_{W}.$$
	Thus, the space $W$ is continuously embedded in $L^{r(x)}(\Omega)$ for any $r\in (1, p_{s}^{*})$. Moreover, this embedding is compact.
\end{theorem}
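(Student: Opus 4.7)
The plan is to prove both the continuous and the compact embedding by a partition-of-unity argument that locally reduces the variable-exponent problem to the classical constant-exponent fractional Sobolev embedding. Continuity of $p$ and $r$ on the compact set $\overline{\Omega}\times\overline{\Omega}$ (respectively $\overline{\Omega}$) is what permits passage to constant exponents on sufficiently small patches.

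\textbf{Step 1 (Localization).} For each $x_{0}\in\overline{\Omega}$, use the strict inequality $r(x_{0}) < p_{s}^{*}(x_{0}) = N\bar{p}(x_{0})/(N - s\bar{p}(x_{0}))$ together with continuity of $p$, $\bar{p}$, $r$ to pick $\delta=\delta(x_{0})>0$ and constants $p_{\star}=p_{\star}(x_{0})$, $r_{\star}=r_{\star}(x_{0})$ satisfying $p_{\star}\leq p(y,z)$ on $(\overline{B_{\delta}(x_{0})})^{2}$ and $r(y)\leq r_{\star} < Np_{\star}/(N-sp_{\star})$ on $\overline{B_{\delta}(x_{0})}\cap\overline{\Omega}$. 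Extract a finite subcover $\{B_{i}\}_{i=1}^{M}$ of $\overline{\Omega}$ with associated constants $(p_{i},r_{i})$, and fix a smooth partition of unity $\{\phi_{i}\}$ subordinate to it.

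\textbf{Step 2 (Kernel comparison and local embedding).} For each $i$, I would show that $\phi_{i}u$ lies in the constant-exponent fractional Sobolev space $W^{s,p_{i}}(B_{i}\cap\Omega)$ with norm controlled by $\|u\|_{W}$. The key is to compare the integrands $|u(y)-u(z)|^{p_{i}}/|y-z|^{N+sp_{i}}$ and $|u(y)-u(z)|^{p(y,z)}/|y-z|^{N+sp(y,z)}$: their ratio equals $(|u(y)-u(z)|/|y-z|^{s})^{p(y,z)-p_{i}}$, which is $\geq 1$ precisely on the ``large-jump'' region, so there the constant-exponent kernel is dominated by the variable-exponent one. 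After handling the cut-off $\phi_{i}$ via a standard Leibniz-type estimate, apply the classical fractional Sobolev embedding $W^{s,p_{i}}(B_{i}\cap\Omega)\hookrightarrow L^{r_{i}}(B_{i}\cap\Omega)$ of Di Nezza, Palatucci and Valdinoci, which is available because $\Omega$ is smooth and bounded, $sp_{i}\leq sp^{+}<N$, and $r_{i}<Np_{i}/(N-sp_{i})$.

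\textbf{Step 3 (Reassembly and compactness).} From $u=\sum_{i}\phi_{i}u$, the elementary inclusion $L^{r_{i}}(B_{i}\cap\Omega)\hookrightarrow L^{r(\cdot)}(B_{i}\cap\Omega)$ (valid since $r\leq r_{i}$ on $B_{i}$), and Step 2, obtain $\|u\|_{L^{r(\cdot)}(\Omega)}\leq C\|u\|_{W}$. For compactness, a bounded sequence $\{u_{n}\}\subset W$ yields bounded $\{\phi_{i}u_{n}\}\subset W^{s,p_{i}}(B_{i}\cap\Omega)$, and the classical compact fractional Rellich--Kondrachov embedding extracts (diagonally in $i$) a subsequence converging in each $L^{r_{i}}(B_{i}\cap\Omega)$; Proposition~\ref{p1} then converts these $L^{r_{i}}$-convergences into convergence in $L^{r(\cdot)}(\Omega)$ after finite summation.

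\textbf{Main obstacle.} The delicate step is the kernel comparison. On the ``small-jump'' region $\{|u(y)-u(z)|<|y-z|^{s}\}$ the pointwise inequality goes the wrong way: one only obtains $|u(y)-u(z)|^{p_{i}}/|y-z|^{N+sp_{i}}\leq|y-z|^{-N}$, which is not integrable near the diagonal. The standard remedy is to choose $p_{\star}$ \emph{strictly} less than $\min_{B_{i}\times B_{i}}p$, so that the positive gap $\varepsilon:=\min p-p_{\star}$ leaves room for a H\"older argument in the modular (producing an extra power of $|u(y)-u(z)|/|y-z|^{s}$ that contributes an integrable factor). This is the single place where the strict inequalities in $(a_{1})$ and in $r<p_{s}^{*}$ are truly used.
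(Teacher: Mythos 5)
Your Steps 1 and 3 (localization by continuity of the exponents, finite subcover, patchwise reduction to constant exponents, reassembly, and diagonal extraction for compactness) faithfully reproduce the skeleton of the argument behind this theorem, which the paper does not prove but imports from \cite{Ez} (itself adapted from \cite{UK}). The genuine gap is exactly at the point you flag, and your proposed remedy does not repair it: keeping the smoothness parameter $s$ fixed and merely lowering the integrability exponent to $p_{\star}<\min_{B_i\times B_i}p$ cannot work, because the embedding $W^{s,p}\hookrightarrow W^{s,q}$ with $q<p$ and the \emph{same} $s$ is false in general, even for constant exponents on bounded domains. Any H\"older argument in the modular runs against the measure $d\mu=|y-z|^{-N}\,dy\,dz$, which is infinite near the diagonal, so the factor $\|1\|_{L^{(p/q)'}(\mu)}$ diverges; and the ``extra power'' of $t=|u(y)-u(z)|/|y-z|^{s}$ you invoke satisfies $t<1$ on the small-jump set, hence yields no decay in $|y-z|$ at all. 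The failure is structural, not technical: in Besov language the Gagliardo space is $B^{s}_{p,p}$, and $B^{s}_{p,p}\not\hookrightarrow B^{s}_{q,q}$ for $q<p$ since the third index must increase, not decrease; concretely, a function with modulus of continuity $\omega(h)=h^{s}(\log(1/h))^{-\beta}$, $\beta\in(1/p,1/q)$, has finite $W^{s,p}$-modular ($\int_{0}(\log(1/h))^{-p\beta}\,dh/h<\infty$) but infinite $W^{s,q}$-modular.

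The correct fix, and the one used in \cite{UK} and \cite{Ez}, is to lower the \emph{smoothness} together with the exponent: on each patch choose constants $q_i<\min_{B_i\times B_i}p$ and $\sigma_i<s$, and target $W^{\sigma_i,q_i}(B_i\cap\Omega)$. Then on the small-jump set $\{|u(y)-u(z)|\le|y-z|^{s}\}$ one has
$$\frac{|u(y)-u(z)|^{q_i}}{|y-z|^{N+\sigma_i q_i}}=t^{q_i}\,|y-z|^{(s-\sigma_i)q_i-N}\le|y-z|^{(s-\sigma_i)q_i-N},$$
which \emph{is} integrable near the diagonal precisely because $\sigma_i<s$, while on the large-jump set $\{t\ge 1\}$ your domination by the variable-exponent modular goes through (the extra weight $|y-z|^{(s-\sigma_i)q_i}$ is bounded on the bounded patch). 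The price is a worse local Sobolev exponent $Nq_i/(N-\sigma_i q_i)$ instead of $Nq_i/(N-sq_i)$, but since $Nq/(N-\sigma q)\to p_{s}^{*}(x_0)$ as $q\uparrow\bar p(x_0)$ and $\sigma\uparrow s$, the strict inequality $r(x_0)<p_{s}^{*}(x_0)$ leaves exactly the room needed --- so the strict gap is indeed consumed here, as you anticipated, but through the pair $(\sigma_i,q_i)$ rather than through $q_i$ alone. With this correction your partition of unity and the delicate Leibniz-type commutator estimates for cutoffs become unnecessary: the patchwise embeddings apply directly to the restrictions $u|_{B_i\cap\Omega}$, and one sums $\|u\|_{L^{r(\cdot)}(B_i\cap\Omega)}$ over the finite cover for continuity, and runs your diagonal extraction on restrictions for compactness.
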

\begin{lemma}[\cite{Bh}, Lemma 3.1]\label{lem3} Suppose that $\Omega \subset \mathbb{R}^{N}$ is a bounded open domain. Furthermore, assume that $\eqref{e1}$ and $\eqref{e2}$ hold. Then $W$ is a separable and reflexive space.
\end{lemma}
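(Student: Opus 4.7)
The plan is to realize $W=W^{s,p(x,y)}(\Omega)$ as (isometrically isomorphic to) a closed subspace of a product of two variable-exponent Lebesgue spaces, each of which is already known to be separable and reflexive under $(a_1)$; then separability and reflexivity of $W$ follow from the fact that closed subspaces of separable reflexive Banach spaces inherit both properties.

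First I would set up the ambient product. Let $X:=L^{\bar p(\cdot)}(\Omega)$ and $Y:=L^{p(\cdot,\cdot)}(\Omega\times\Omega)$, the latter being the variable-exponent Lebesgue space on $\Omega\times\Omega$ (Lebesgue measure) with exponent function $(x,y)\mapsto p(x,y)$. By hypothesis $(a_1)$ we have $2\le p^-\le p^+<\infty$, so both exponents are bounded away from $1$ and $\infty$; standard variable-exponent theory (as recalled in Section 3 and in the references \cite{radrep,J,LD}) then gives that $X$ and $Y$ are separable and reflexive Banach spaces. Consequently the product $X\times Y$, endowed with the norm $\|(u,v)\|:=\|u\|_X+\|v\|_Y$, is itself separable and reflexive.

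Next I would define the map $T:W\to X\times Y$ by
\begin{equation*}
T u := \bigl(u,\;F_u\bigr),\qquad F_u(x,y):=\frac{u(x)-u(y)}{|x-y|^{\,s+N/p(x,y)}},
\end{equation*}
so that $\int_{\Omega\times\Omega}|F_u(x,y)|^{p(x,y)}\,dx\,dy$ equals the Gagliardo modular appearing in the definition of $[u]_{s,p(x,y)}$. Using the symmetry $(a_2)$ to interchange the roles of $x$ and $y$ consistently, one checks via the definition of the Luxemburg norm that $\|F_u\|_Y=[u]_{s,p(x,y)}$, so that $\|Tu\|_{X\times Y}=\|u\|_X+[u]_{s,p(x,y)}=\|u\|_W$; thus $T$ is a linear isometry.

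The remaining point, which I expect to be the only delicate step, is to show that $T(W)$ is closed in $X\times Y$. Suppose $\{u_n\}\subset W$ and $Tu_n\to (u,G)$ in $X\times Y$. Then $u_n\to u$ in $L^{\bar p(\cdot)}(\Omega)$, so after passing to a subsequence $u_n\to u$ a.e.\ in $\Omega$; this forces $F_{u_n}(x,y)\to F_u(x,y)$ a.e.\ on $\Omega\times\Omega$. On the other hand $F_{u_n}\to G$ in $Y$, hence (again up to a subsequence) $F_{u_n}\to G$ a.e., so $G=F_u$ a.e. This shows $u\in W$ and $Tu=(u,G)$, i.e.\ $T(W)$ is closed. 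Therefore $W$ is isometrically isomorphic to a closed subspace of the separable reflexive space $X\times Y$, which yields both separability and reflexivity of $W$ and completes the proof.
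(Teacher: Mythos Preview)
Your argument is correct and is precisely the standard route: embed $W$ isometrically into the product $L^{\bar p(\cdot)}(\Omega)\times L^{p(\cdot,\cdot)}(\Omega\times\Omega)$ via $u\mapsto (u,F_u)$, verify that the image is closed, and pull back separability and reflexivity from the product. Note, however, that the paper does not itself prove this lemma; it merely quotes it from \cite{Bh}, so there is no in-paper proof to compare against. Your proof is essentially the one given in \cite{Bh} (and in the analogous classical result for $W^{s,p}$ with constant exponent). One minor remark: the symmetry hypothesis $(a_2)$ is not actually used in the isometry or closedness steps---it is only needed upstream to ensure the Gagliardo seminorm is well defined and that $W$ is a linear space---so you can safely drop the parenthetical about interchanging $x$ and $y$.
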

\begin{remark}
	We define the subspace $W_{0}$ of $W$ by 
	$$ W_{0}=\left\{ u\in W,\;\; u=0\;\text{in}\; \mathbb{R}^{N}\backslash \Omega\right\}, $$
	
	which is the closure of $C^{\infty}_{0}(\Omega)$ in $ W$, that is, 
	$$ W_{0}=\overline{C^{\infty}_{0}(\Omega)}^{\|.\|_{W}}.$$
	Then 
	\begin{enumerate}
		\item Theorem \ref{th1} remains true if we replace $W$ by $W_{0}$.
		\item Theorem \ref{th1} implies that $[.]_{s, p(x,y)}$ is a norm on $W_{0}$, which is equivalent to the norm $\|.\|_{W}.$
		\item $W_{0}$ is a Banach space with the norm 
		$$ \|u\|_{W_{0}}=[u]_{s, p(x,y)}=\inf\left\{\lambda > 0, \; \int_{Q} \frac{|u(x)-u(y)|^{p(x,y)}}{\lambda^{p(x,y)}|x-y|^{N+sp(x,y)}}\,dxdy\leq 1\right\}.$$	
	\end{enumerate}
\end{remark}
An imprtant role in manipulating the fractional Soblev-Slobodteskii spaces with variable exponent is palyed by the $(s, p(.,.))$-convex modular function $\rho_{s,p(.,.)}:W_{0}\rightarrow \mathbb{R}.$ defined by 
$$ \rho_{s,p(.,.)}(u)=\int_{Q}\left( \frac{|u(x)-u(y)|}{|x-y|^{s}}\right)^{p(x,y)}\frac{dxdy}{|x-y|^{N}}.$$
\begin{proposition}[\cite {Ez}, Lemma 2.2 and Remark 2.2]\label{Pr}
	For $u\in W_{0}$ and $\{u_{n}\}_{n\in \mathbb{N}} \subset W_{0}$, we have 
	\begin{enumerate}
		\item $[u]_{s, p(.,.)}\leq 1\Rightarrow [u]_{s, p(.,.)}^{p^{+}}\leq \rho_{s, p(.,.)}(u)\leq [u]_{s, p(.,.)}^{p^{-}},$
		\item $[u]_{s, p(.,.)}\geq  1\Rightarrow [u]_{s, p(.,.)}^{p^{-}}\leq \rho_{s, p(.,.)}(u)\leq [u]_{s, p(.,.)}^{p^{+}},$
		\item $\lim_{n\rightarrow +\infty}[u_{n}]_{s, p(.,.)}=0\Leftrightarrow  \lim_{n\rightarrow+\infty}\rho_{s,p(.,.)}(u_{n})=0, $
		\item $\lim_{n\rightarrow +\infty}[u_{n}]_{s, p(.,.)}=+\infty\Leftrightarrow  \lim_{n\rightarrow+\infty}\rho_{s,p(.,.)}(u_{n})=+\infty, $
	\end{enumerate}
\end{proposition}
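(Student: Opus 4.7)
The plan is to derive all four statements from the standard Luxemburg--modular scaling argument applied to the convex modular $\rho_{s,p(.,.)}$. The central observation is that for any $u \neq 0$ and any $\mu > 0$,
\[
\rho_{s,p(.,.)}(u/\mu) = \int_{Q} \mu^{-p(x,y)}\,\frac{|u(x)-u(y)|^{p(x,y)}}{|x-y|^{N+sp(x,y)}}\,dx\,dy,
\]
so the map $\mu \mapsto \rho_{s,p(.,.)}(u/\mu)$ is continuous and strictly decreasing on $(0,\infty)$, with $\rho_{s,p(.,.)}(u/\mu) \to 0$ as $\mu \to \infty$ and $\to \infty$ as $\mu \to 0^+$.

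First I would establish the attainment property: if $\lambda := [u]_{s,p(.,.)} > 0$, then $\rho_{s,p(.,.)}(u/\lambda) = 1$. The bound $\le 1$ follows by picking a sequence $\mu_n \downarrow \lambda$ with $\rho_{s,p(.,.)}(u/\mu_n) \le 1$ and invoking the monotone convergence theorem on the integrand $\mu_n^{-p(x,y)}$. The reverse bound $\ge 1$ comes from the infimum definition, since $\rho_{s,p(.,.)}(u/\mu) > 1$ for every $\mu < \lambda$, and again taking $\mu \uparrow \lambda$ with Fatou's lemma. The trivial case $u \equiv 0$ gives $\lambda = 0$ and $\rho = 0$, and is handled separately.

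Next, for the inequalities (1) and (2), I would rewrite
\[
\rho_{s,p(.,.)}(u) = \int_{Q} \lambda^{p(x,y)} \cdot \frac{|u(x)-u(y)|^{p(x,y)}}{\lambda^{p(x,y)}|x-y|^{N+sp(x,y)}}\,dx\,dy.
\]
When $\lambda \le 1$, assumption $(a_1)$ gives the pointwise bounds $\lambda^{p^{+}} \le \lambda^{p(x,y)} \le \lambda^{p^{-}}$ a.e.\ on $Q$; integrating and using $\rho_{s,p(.,.)}(u/\lambda) = 1$ yields $\lambda^{p^{+}} \le \rho_{s,p(.,.)}(u) \le \lambda^{p^{-}}$, which is (1). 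When $\lambda \ge 1$ the inequalities between $\lambda^{p(x,y)}$ and $\lambda^{p^{\pm}}$ reverse, giving $\lambda^{p^{-}} \le \rho_{s,p(.,.)}(u) \le \lambda^{p^{+}}$, which is (2).

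For the convergence parts, I would argue by dichotomy. If $[u_n]_{s,p(.,.)} \to 0$, then eventually $[u_n]_{s,p(.,.)} \le 1$, and (1) forces $\rho_{s,p(.,.)}(u_n) \le [u_n]_{s,p(.,.)}^{p^{-}} \to 0$. Conversely, if $\rho_{s,p(.,.)}(u_n) \to 0$, then $[u_n]_{s,p(.,.)} \le 1$ for all large $n$, for otherwise (2) would give $\rho_{s,p(.,.)}(u_n) \ge [u_n]_{s,p(.,.)}^{p^{-}} \ge 1$; then (1) gives $[u_n]_{s,p(.,.)}^{p^{+}} \le \rho_{s,p(.,.)}(u_n) \to 0$, whence $[u_n]_{s,p(.,.)} \to 0$. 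This proves (3), and (4) follows by the symmetric dichotomy: (2) yields the forward direction from $[u_n]_{s,p(.,.)}^{p^{-}} \le \rho_{s,p(.,.)}(u_n)$ once $[u_n]_{s,p(.,.)} \ge 1$, while a blow-up of $\rho_{s,p(.,.)}(u_n)$ forces $[u_n]_{s,p(.,.)} \ge 1$ eventually, after which $\rho_{s,p(.,.)}(u_n) \le [u_n]_{s,p(.,.)}^{p^{+}}$ implies $[u_n]_{s,p(.,.)} \to \infty$.

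The only delicate point is the attainment $\rho_{s,p(.,.)}(u/[u]_{s,p(.,.)}) = 1$, which I expect to be the main technical obstacle: it rests on continuity of the one-parameter family $\mu \mapsto \rho_{s,p(.,.)}(u/\mu)$ justified by monotone convergence in the variable exponent integrand. Once that is in hand, the remainder is the algebraic bound $\lambda^{p^{+}} \le \lambda^{p(x,y)} \le \lambda^{p^{-}}$ (or its reverse), together with elementary dichotomy for the limits.
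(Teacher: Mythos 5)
The paper itself contains no proof of this proposition---it is imported verbatim from \cite{Ez} (Lemma 2.2 and Remark 2.2)---so there is no internal argument to compare against; your proposal supplies the standard Luxemburg modular argument, and it is essentially correct and is the same route the cited source takes: establish the attainment identity $\rho_{s,p(.,.)}\bigl(u/[u]_{s,p(.,.)}\bigr)=1$, then integrate the pointwise bounds $\lambda^{p^{+}}\leq\lambda^{p(x,y)}\leq\lambda^{p^{-}}$ for $\lambda\leq 1$ (reversed for $\lambda\geq 1$) to get (1)--(2), and run the elementary dichotomies for (3)--(4). One technical slip is worth fixing. In the step showing $\rho_{s,p(.,.)}(u/\lambda)\geq 1$, plain Fatou applied to $\mu\uparrow\lambda$ gives $\rho_{s,p(.,.)}(u/\lambda)\leq\liminf_{\mu\uparrow\lambda}\rho_{s,p(.,.)}(u/\mu)$, which is the useless direction, since the integrands \emph{decrease} as $\mu$ increases. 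What you need is the reverse Fatou (equivalently, dominated or decreasing monotone convergence), with dominating function the integrand at some fixed $\mu_{0}<\lambda$; that dominator is integrable because $\rho_{s,p(.,.)}(u/\mu)<+\infty$ for \emph{every} $\mu>0$ when $u\in W_{0}$, a preliminary fact that should be recorded explicitly and that itself uses $p^{+}<+\infty$ from $(a_{1})$, via $\mu^{-p(x,y)}\leq\max\bigl(\mu^{-p^{-}},\mu^{-p^{+}}\bigr)$ together with the finiteness of the modular at one scale built into the definition of $W$. The same remark justifies the continuity of $\mu\mapsto\rho_{s,p(.,.)}(u/\mu)$ from the left, which you attribute to monotone convergence alone. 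With that one repair, and the trivial case $u\equiv 0$ treated separately as you indicate, the proof is complete; the dichotomy arguments for (3) and (4) are correct as written (note they too silently use $p^{+}<+\infty$ when extracting $[u_{n}]_{s,p(.,.)}\geq\rho_{s,p(.,.)}(u_{n})^{1/p^{+}}\to+\infty$).
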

\begin{proposition}\label{p3}
	It holds that 
	$$	\sigma^{-}\left([u]_{s,p(.,.)}\right)\leq \rho_{s,p(.,.)}(u)\leq\sigma^{+}\left([u]_{s,p(.,.)}\right)$$
	for all $u\in W_{0}$, with $\sigma^{-}(\tau):=\min(\tau^{h^{-}}, \tau^{h^{+}})$ and $\sigma^{+}(\tau):=\max(\tau^{h^{-}}, \tau^{h^{+}})$ for $\tau \geq 0$.	
\end{proposition}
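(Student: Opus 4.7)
The plan is to derive Proposition \ref{p3} directly from Proposition \ref{Pr} by a simple case split on whether the modular radius $\tau := [u]_{s,p(.,.)}$ is at most $1$ or at least $1$. The key observation is that the two-sided bounds in Proposition \ref{Pr} are already of the form $\tau^{p^+} \leq \rho_{s,p(.,.)}(u) \leq \tau^{p^-}$ in the first case and $\tau^{p^-} \leq \rho_{s,p(.,.)}(u) \leq \tau^{p^+}$ in the second; one only needs to identify the smaller exponent's power with $\sigma^{-}(\tau)$ and the larger exponent's power with $\sigma^{+}(\tau)$, reading the (presumable) ``$h^{-},h^{+}$'' in the statement as ``$p^{-},p^{+}$'' in accordance with the rest of this subsection.

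First I would handle the trivial case $u=0$, where both sides vanish. Next, for $\tau\le 1$ with $u\ne 0$, I would note that since $1\le p^{-}\le p^{+}$ and $0\le \tau\le 1$ one has $\tau^{p^{+}}\le \tau^{p^{-}}$, hence by definition
\[
\sigma^{-}(\tau)=\tau^{p^{+}},\qquad \sigma^{+}(\tau)=\tau^{p^{-}}.
\]
Applying part (1) of Proposition \ref{Pr} gives exactly $\sigma^{-}(\tau)\le \rho_{s,p(.,.)}(u)\le \sigma^{+}(\tau)$. In the complementary case $\tau\ge 1$, the inequality $\tau^{p^{-}}\le \tau^{p^{+}}$ gives $\sigma^{-}(\tau)=\tau^{p^{-}}$ and $\sigma^{+}(\tau)=\tau^{p^{+}}$, so part (2) of Proposition \ref{Pr} yields the desired two-sided estimate.

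Since each of these cases is a one-line reading of Proposition \ref{Pr}, there is essentially no obstacle, the only mildly delicate point being the consistency check at the boundary $\tau=1$, where $\sigma^{-}(\tau)=\sigma^{+}(\tau)=1$ and both formulations agree. I would close by remarking that the same argument, applied to Proposition \ref{p1} in place of Proposition \ref{Pr}, yields the analogous statement Proposition \ref{p2} for the Luxemburg norm on $L^{h(\cdot)}(\Omega)$, so the two propositions are proved in a uniform way.
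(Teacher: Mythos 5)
Your proof is correct and is precisely the argument the paper leaves implicit: Proposition \ref{p3} is stated without proof as an immediate reformulation of Proposition \ref{Pr}, and your case split $[u]_{s,p(.,.)}\leq 1$ versus $[u]_{s,p(.,.)}\geq 1$, identifying $\sigma^{-}$ and $\sigma^{+}$ with the appropriate powers in each regime, is exactly how the two statements match up (with the boundary case $\tau=1$ consistent in both). Your reading of the exponents $h^{-},h^{+}$ in the statement as $p^{-},p^{+}$ --- a typo carried over verbatim from the definition of $\sigma^{\pm}$ in Proposition \ref{p2} --- is also the correct interpretation.
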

\subsection{Properties of the fractional $p(x)$-Laplacian}
In order to define the fractional $p(x)$-Laplacian for Dirichlet boundary conditions we consider the operator $\mathcal{L} :W_{0}\rightarrow W^{*}_{0}$ defined by 
$$ \langle \mathcal{L}(u), v\rangle=\int_{Q} \frac{|u(x)-u(y)|^{p(x,y)-2}(u(x))-u(y))(v(x)-v(y))}{|x-y|^{N+sp(x,y)}}\,dxdy,\;\;\;\forall u,v\in W_{0},$$
where $W_{0}^{*}$ is the dual space of $W_{0}$. We consider also the following functional
$$ I_{1}(u)=\int_{Q}\frac{1}{p(x,y)} \frac{|u(x)-u(y)|^{p(x,y)}}{|x-y|^{N+sp(x,y)}}\,dxdy,\;\;\;\forall u\in W_{0}.$$
\begin{proposition}[see \cite{Bh} ]\label{pr3} We have, 
	\begin{enumerate}
		\item the functional $I_{1}$ is well defined on $W_{0}$. Moreover $I\in C^{1}(W_{0}, \mathbb{R})$ with the derivative given by 
		$$\langle I'(u), v \rangle=\langle \mathcal{L}(u), v\rangle,\;\;\forall u,v\in W_{0}.$$
		\item the operator $\mathcal{L} :W_{0}\rightarrow W^{*}_{0}$ is a bounded and strictly monotone operator.
		\item the operator $\mathcal{L} :W_{0}\rightarrow W^{*}_{0}$ is a mapping of type $(S_{+})$, i.e, if $u_{n}\rightharpoonup u $ in $W_{0}$ and 
		$$ \limsup_{n\rightarrow +\infty}\langle L(u_{n})-L(u), u_{n}-u\rangle\leq 0, $$
		then $u_{n}\rightarrow u$ in $W_{0}$.
		\item the operator $\mathcal{L} :W_{0}\rightarrow W^{*}_{0}$ is a homeomorphism.
		\item the operator $\mathcal{L} :W_{0}\rightarrow W^{*}_{0}$ is hemicontinuous.
	\end{enumerate}
\end{proposition}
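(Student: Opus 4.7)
For (1), note first that $I_1(u)<\infty$ for every $u\in W_0$ since $1/p(x,y)\leq 1/p^{-}$ and $\rho_{s,p(.,.)}(u)<\infty$. To establish Gateaux differentiability with derivative $\mathcal{L}(u)$, I fix $u,v\in W_0$ and differentiate the integrand of $I_1(u+tv)$ in $t$ pointwise on $Q$. The mean value theorem together with the elementary bound $|\alpha+t\beta|^{p-1}|\beta|\leq C(p^{+})(|\alpha|^{p-1}|\beta|+|\beta|^{p})$ and Young's inequality dominates the difference quotient by an integrable function of $(x,y)$ built out of $\rho_{s,p(.,.)}(u)$ and $\rho_{s,p(.,.)}(v)$; dominated convergence then delivers $\langle I_1'(u),v\rangle=\langle \mathcal{L}(u),v\rangle$. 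Continuity of $u\mapsto \mathcal{L}(u)$ from $W_0$ into $W_0^{*}$ follows by Vitali's convergence theorem: if $u_n\to u$ in $W_0$ then Proposition \ref{Pr} gives $\rho_{s,p(.,.)}(u_n-u)\to 0$, so $U_n(x,y):=u_n(x)-u_n(y)$ converges in measure with respect to $d\mu=|x-y|^{-(N+sp(x,y))}\,dx\,dy$ to $U(x,y)$, while the bounded modulars supply equi-integrability of $|U_n|^{p(x,y)-1}$. This yields (1); in particular $\mathcal{L}$ is norm-continuous, which a fortiori gives hemicontinuity (5).

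For (2) and (3), boundedness is a consequence of H\"older's inequality in $L^{p(.,.)}(Q,d\mu)$ combined with Proposition \ref{Pr}, yielding
$$|\langle \mathcal{L}(u),v\rangle|\leq C\big([u]_{s,p(.,.)}^{p^{+}-1}+[u]_{s,p(.,.)}^{p^{-}-1}\big)[v]_{s,p(.,.)}.$$
Strict monotonicity reduces to the pointwise inequality $(|\alpha|^{p-2}\alpha-|\beta|^{p-2}\beta)(\alpha-\beta)>0$ for $\alpha\ne\beta$ and $p>1$, applied with $\alpha=u(x)-u(y)$, $\beta=v(x)-v(y)$ and integrated over $Q$. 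For the $(S_+)$ property I exploit the assumption $p^{-}\geq 2$ from $(a_{1})$ to invoke the Clarkson-type refinement
$$(|\alpha|^{p-2}\alpha-|\beta|^{p-2}\beta)(\alpha-\beta)\geq 2^{2-p}|\alpha-\beta|^{p},\qquad p\geq 2,$$
which yields $\langle \mathcal{L}(u_n)-\mathcal{L}(u),u_n-u\rangle\geq 2^{2-p^{+}}\rho_{s,p(.,.)}(u_n-u)$. The hypothesis $\limsup_n\langle \mathcal{L}(u_n)-\mathcal{L}(u),u_n-u\rangle\leq 0$ then forces $\rho_{s,p(.,.)}(u_n-u)\to 0$, and Proposition \ref{Pr}(3) translates this back into $[u_n-u]_{s,p(.,.)}\to 0$, i.e. $u_n\to u$ in $W_0$.

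For (4), strict monotonicity gives injectivity, while the coercivity
$$\frac{\langle \mathcal{L}(u),u\rangle}{[u]_{s,p(.,.)}}=\frac{\rho_{s,p(.,.)}(u)}{[u]_{s,p(.,.)}}\geq [u]_{s,p(.,.)}^{p^{-}-1}\longrightarrow \infty \quad\text{as } [u]_{s,p(.,.)}\to\infty$$
(via Proposition \ref{Pr}(2)) together with the reflexivity of $W_0$ (Lemma \ref{lem3}), continuity and monotonicity of $\mathcal{L}$ allows the Browder--Minty theorem to supply surjectivity; hence $\mathcal{L}$ is bijective with inverse $\mathcal{L}^{-1}:W_{0}^{*}\to W_0$. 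For continuity of $\mathcal{L}^{-1}$: take $f_n\to f$ in $W_0^{*}$ and set $u_n=\mathcal{L}^{-1}(f_n)$, $u=\mathcal{L}^{-1}(f)$. Coercivity bounds $\{u_n\}$ in $W_0$, so along a subsequence $u_{n_k}\rightharpoonup w$, and
$$\langle \mathcal{L}(u_{n_k})-\mathcal{L}(w),u_{n_k}-w\rangle=\langle f_{n_k}-\mathcal{L}(w),u_{n_k}-w\rangle\longrightarrow 0,$$
so the $(S_+)$ property forces $u_{n_k}\to w$; continuity of $\mathcal{L}$ and injectivity give $w=u$, and a standard subsequence argument upgrades to $u_n\to u$.

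The step I expect to be most delicate is (1): constructing the dominating function uniformly in the variable exponent $p(x,y)$ inside the Gagliardo kernel requires simultaneous control of the regions where $p$ is near $p^{-}$ and near $p^{+}$, and the Vitali/equi-integrability argument for norm-continuity of $\mathcal{L}$ must rely on the modular-to-norm conversion of Proposition \ref{Pr} rather than a direct $L^{p}$ estimate, since no single constant-exponent Lebesgue space captures the integrability on $(Q,d\mu)$.
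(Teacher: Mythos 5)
Your proof is sound, but be aware that the paper contains no proof of Proposition \ref{pr3} at all: it delegates the statement entirely to \cite{Bh}, where these properties are established for the fractional $p(x,y)$-Laplacian with general $p^{-}>1$. Your reconstruction follows the same standard monotone-operator and modular toolkit as that reference, with one genuine simplification: by invoking the paper's standing hypothesis $(a_1)$, i.e. $p^{-}\geq 2$, you can apply the Simon-type inequality $(|\alpha|^{p-2}\alpha-|\beta|^{p-2}\beta)(\alpha-\beta)\geq 2^{2-p}|\alpha-\beta|^{p}$ pointwise in $(x,y)$ and integrate, obtaining $\langle \mathcal{L}(u_n)-\mathcal{L}(u),u_n-u\rangle\geq 2^{2-p^{+}}\rho_{s,p(.,.)}(u_n-u)$; this makes $\mathcal{L}$ uniformly monotone, so strict monotonicity and the $(S_{+})$ property fall out simultaneously --- indeed your proof of (3) never uses the weak convergence $u_n\rightharpoonup u$, so you prove something strictly stronger than stated, whereas the cited reference must argue more delicately to cover $1<p^{-}<2$. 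One step you should tighten in (1): Vitali's theorem applied with a fixed test function $v$ only yields $\langle \mathcal{L}(u_n),v\rangle\to\langle \mathcal{L}(u),v\rangle$, which is pointwise (weak-$*$) convergence in $W_0^{*}$; the $C^1$ claim and your use of continuity in step (4) require norm convergence $\|\mathcal{L}(u_n)-\mathcal{L}(u)\|_{W_0^{*}}\to 0$, so you must show that $|U_n|^{p(x,y)-2}U_n\to |U|^{p(x,y)-2}U$ in $L^{p'(\cdot,\cdot)}(Q,d\mu)$ (equi-integrability of $\{|U_n|^{p(x,y)}\}$ plus tightness, the latter needed since $\mu(Q)=+\infty$, then the modular-to-norm conversion in the dual variable exponent space) and conclude by H\"older's inequality uniformly over $[v]_{s,p(.,.)}\leq 1$. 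This is exactly the fixable delicacy you flag yourself, so it is a matter of writing out the Nemytskii continuity rather than a missing idea; the remainder --- boundedness via H\"older and Proposition \ref{Pr}, coercivity $\langle\mathcal{L}(u),u\rangle/[u]_{s,p(.,.)}\geq [u]_{s,p(.,.)}^{p^{-}-1}$, Browder--Minty on the reflexive space of Lemma \ref{lem3}, and the $(S_{+})$-based continuity of $\mathcal{L}^{-1}$ --- is the standard treatment and is correct as written.
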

\begin{remark}\label{RRR}
According to \cite[Example 2.3.7, p.26]{H}, the operator $\mathcal{L}_{H}$, the realization of $\mathcal{L}$ at $H=L^{2}(\Omega)$ defined by 

$$ \left\{
\begin{array}{l}
D(\mathcal{L}_{H})=\{u\in W_{0}, \; \mathcal{L}(u)\in H\}, \\
\mathcal{L}_{H}(u)=\mathcal{L}(u)\;\text{if}\; u\in D(\mathcal{L}_{H}),
\end{array}
\right.$$
is a maximal monotone in $H$.
\end{remark}
\subsection{Subdifferentials}
In this subsection, we introduce some useful properties of subdifferentials of proper, convex, and lower semi-continuous functional on a Hilbert space.\par 
Let $H$ be a Hilbert space with the inner product $(., .)$ and the norm $\|\;.\;\|_{2}$. 	
For a functional $\varphi :H \rightarrow (-\infty, +\infty]$, we shall write 
$$ D(\varphi, r)=\{u\in H,\,\varphi(u)\leq r\}\;\;\text{for}\; r\in \mathbb{R}\;\;\text{and}\; D(\varphi)=\bigcup_{r\in \mathbb{R}} D(\varphi, r).$$
Let $\varphi :H\rightarrow (-\infty, +\infty]$ the subdifferential $\partial \varphi $ of $\varphi$ is defined by 
$$ \partial \varphi(u)=\{f\in H, \; \varphi(v)-\varphi(u)\geq (f, v-u),\;\;\forall v\in H\}. $$
It is well known that the subdifferential $\partial \varphi$ is a maximal monotone operator and $D(\partial \varphi)\subset D(\varphi)$.\\
The following type chain rule of subdifferentials is taken from [\cite{H}, Lemma $3.3$, p. 73].
\begin{lemma}\label{lem2}
	Let $\varphi :H\rightarrow (-\infty, +\infty]$ be a proper, convex and lower semicontinuous functional. For some $T> 0$, let $u\in W^{1,1}	(0,T,H)$ and $u(t)\in D(\partial \varphi)$ a.e in $[0,T].$ If there exists a function $f\in L^{2}(0,T,H)$ such that $f(t)\in \partial \varphi(u(t))$ a.e in $[0,T]$,  then the function $t\rightarrow \varphi(u(t))$ is absolutely continuous on $[0,T]$ and 
	$$ \frac{d}{dt}\varphi(u(t))=\left(f(t), \frac{du(t)}{dt}\right), \;\;\text{a.e. in}\; [0,T].$$
\end{lemma}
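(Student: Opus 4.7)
The strategy is Br\'ezis's classical sandwich approach. Let $g(t):=\varphi(u(t))$ and let $E\subset[0,T]$ be the full-measure set on which $u(t)\in D(\partial\varphi)$ and $f(t)\in\partial\varphi(u(t))$ simultaneously hold. My plan is to establish two things in order: (i) $g$ restricted to $E$ extends to an absolutely continuous function on $[0,T]$; and (ii) its a.e.\ derivative equals $(f(t),u'(t))$.

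For the sandwich step, for any $s,t\in E$ with $s<t$, applying the subdifferential inequality at $u(s)$ with test vector $u(t)$ and at $u(t)$ with test vector $u(s)$ yields
$$(f(s),u(t)-u(s))\ \le\ g(t)-g(s)\ \le\ (f(t),u(t)-u(s)).$$
Combined with $u(t)-u(s)=\int_s^tu'(\tau)\,d\tau$ (from $u\in W^{1,1}(0,T;H)$) and Cauchy--Schwarz, this forces
$$|g(t)-g(s)|\le\max(\|f(s)\|_2,\|f(t)\|_2)\int_s^t\|u'(\tau)\|_2\,d\tau.$$

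For absolute continuity, since a direct summation over disjoint subintervals is obstructed by the dependence on the endpoint values of $\|f\|_2$, I would regularize via the Moreau--Yosida approximation $\varphi_\lambda$, which is convex and $C^1$ with $\nabla\varphi_\lambda=(\partial\varphi)_\lambda$ Lipschitz. The classical $C^1$ chain rule then gives $\varphi_\lambda\circ u\in AC([0,T])$ with $(\varphi_\lambda\circ u)'(t)=((\partial\varphi)_\lambda(u(t)),u'(t))$. Integrating and letting $\lambda\to 0^+$ uses (a) the monotone pointwise convergence $\varphi_\lambda(u(t))\uparrow\varphi(u(t))$, (b) the minimum-section bound $\|(\partial\varphi)_\lambda(u(t))\|_2\le\|f(t)\|_2$ with $\|f\|_2\in L^2\subset L^1$, and (c) the strong convergence $(\partial\varphi)_\lambda(u(t))\to(\partial\varphi(u(t)))^{\circ}$, combined with dominated convergence, to conclude that $g$ is absolutely continuous on $[0,T]$ and extends uniquely from its values on $E$.

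For identification of the derivative, I return to the sandwich. At any $t_0\in E$ that is simultaneously a Lebesgue point of $f$ and of $u'$, and a point of differentiability of $g$ (co-null thanks to absolute continuity), I divide by $h>0$ with $t=t_0+h$, $s=t_0$ and send $h\to 0^+$: the strong convergence of the difference quotient $h^{-1}(u(t_0+h)-u(t_0))\to u'(t_0)$ together with the Lebesgue-point properties force both the upper and lower sandwich bounds to converge to $(f(t_0),u'(t_0))$. The same argument for $h\to 0^-$ gives the two-sided derivative, so $g'(t_0)=(f(t_0),u'(t_0))$ a.e., yielding the claimed identity. The main obstacle I expect is the absolute-continuity step: because $f$ lies only in $L^2$ and the sandwich estimate couples the endpoint values of $\|f\|_2$ with the interval length, the naive $\varepsilon$--$\delta$ summation over disjoint intervals fails, and one genuinely needs either the Yosida regularization described above (where the Lipschitz case is routine) or a careful Lebesgue-point extraction to recover the AC property before the derivative identification can even be carried out.
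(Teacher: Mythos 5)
The paper does not prove this lemma at all: it is imported verbatim from Br\'ezis [\cite{H}, Lemma 3.3, p.~73], so the only meaningful comparison is with the classical proof in that reference --- and your argument is essentially exactly that proof: the two-sided subgradient sandwich, Moreau--Yosida regularization with the chain rule for the $C^1$ convex functional $\varphi_\lambda$, the minimal-section bound $\|(\partial\varphi)_\lambda(u(t))\|_{2}\le\|f(t)\|_{2}$, the monotone convergence $\varphi_\lambda(u(t))\uparrow\varphi(u(t))$, and a pointwise identification of the derivative. Your diagnosis that the naive $\varepsilon$--$\delta$ summation over disjoint intervals fails, and that the regularization is the genuine content of the absolute-continuity step, is also accurate.

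Two points need repair. First, your dominated-convergence item (b) is justified by ``$\|f\|_{2}\in L^2\subset L^1$'', but the integrand to be dominated is $\left((\partial\varphi)_\lambda(u(\tau)),u'(\tau)\right)$, whose natural majorant is the product $\|f(\tau)\|_{2}\,\|u'(\tau)\|_{2}$; under the hypothesis $u\in W^{1,1}(0,T,H)$ as stated in the lemma this product need not be integrable (the product of an $L^2$ and an $L^1$ function can fail to be $L^1$), so the passage to the limit and hence the AC conclusion do not close. This is less your error than the paper's: Br\'ezis's Lemma 3.3 actually assumes $u\in W^{1,2}(0,T;H)$, under which Cauchy--Schwarz gives the $L^1$ majorant and your argument goes through; in the paper's application this is harmless since Definition \ref{de1} anyway requires $du/dt\in L^{2}(0,T,H)$. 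You should state this strengthened hypothesis explicitly rather than leave item (b) as written. Second, in the identification step you divide the two-sided sandwich by $h$ and appeal to Lebesgue points of $f$, but the upper bound involves the \emph{pointwise} value $f(t_0+h)$, which a Lebesgue point (controlling only integral averages) does not see; as written this limit does not follow. The standard fix is simpler than what you propose: at any $t_0$ where $g=\varphi\circ u$ and $u$ are both differentiable and $f(t_0)\in\partial\varphi(u(t_0))$, use only the one-sided subgradient inequality $g(t)-g(t_0)\ge\left(f(t_0),u(t)-u(t_0)\right)$ with the \emph{fixed} vector $f(t_0)$ (valid for a.e.\ $t$, hence for all $t$ by continuity of both sides); dividing by $t-t_0>0$ gives $g'(t_0)\ge\left(f(t_0),u'(t_0)\right)$ and dividing by $t-t_0<0$ gives the reverse inequality, with no Lebesgue-point extraction needed. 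With these two adjustments your proof is a complete and faithful reconstruction of the cited argument.
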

In what follows, let us recall a very important result found in [\cite{I}, Theorem $3.4$, p. 297].
\begin{theorem}\label{th33}
	Under the following assumptions :
	\begin{enumerate}
		\item the set $D(\varphi, r)$ is compact in $H$ for any $r\in \mathbb{R}$,
		\item $D(\varphi )\subset D(\phi),$
		\item the set $\{(\partial \phi)^{0}(u),\;\;u\in D(\varphi, r)\}$ is bounded in $H$ for any $r\in \mathbb{R}$.

	\end{enumerate}
	where $(\partial \phi)^{0}$ is the unique element of least norm.
	Then, for each $h\in D(\varphi)$, there exist $T> 0$ and a strong solution of this initial value problem 
	\begin{equation*}\label{eq2}\left\{
	\begin{array}{llc}
	\frac{du(t)}{dt}+\partial\varphi(u(t)) -\partial \phi(u(t))\ni 0, \;\;\text{in}\ &H,\; 0< t< T, \\
	u(0)=h, & &
	\end{array}\right.
	\end{equation*}	
\end{theorem}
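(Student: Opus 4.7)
The plan is to prove local existence by regularizing the repulsive subdifferential $\partial\phi$ via the Moreau--Yosida procedure, solving the resulting perturbed Cauchy problem by classical theory, deriving uniform a priori estimates from the three hypotheses, and passing to the limit using the compactness furnished by assumption (1). Concretely, for $\lambda>0$ let $\phi_\lambda$ be the Moreau--Yosida regularization of $\phi$ with resolvent $J_\lambda=(I+\lambda\partial\phi)^{-1}$; then $\nabla\phi_\lambda\in C(H;H)$ is $\lambda^{-1}$-Lipschitz, satisfies $\nabla\phi_\lambda(u)\in\partial\phi(J_\lambda u)$, and obeys the Yosida bound $\|\nabla\phi_\lambda(u)\|_2\le\|(\partial\phi)^{0}(u)\|_2$ on $D(\partial\phi)$. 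For $h\in D(\varphi)$, Br\'ezis's theorem on Lipschitz perturbations of a maximal monotone subdifferential produces a unique strong solution $u_\lambda$ of
\begin{equation*}
u_\lambda'(t)+\partial\varphi(u_\lambda(t))\ni \nabla\phi_\lambda(u_\lambda(t)),\qquad u_\lambda(0)=h,
\end{equation*}
on some interval $[0,T_\lambda]$, with $u_\lambda(t)\in D(\partial\varphi)$ for a.e. $t$ and $u_\lambda'\in L^{2}_{\mathrm{loc}}$.

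Next I take the inner product with $u_\lambda'$: since $\nabla\phi_\lambda(u_\lambda)-u_\lambda'\in\partial\varphi(u_\lambda)$, the chain rule of Lemma \ref{lem2} yields
\begin{equation*}
\|u_\lambda'(t)\|_2^2+\tfrac{d}{dt}\varphi(u_\lambda(t))=\bigl(\nabla\phi_\lambda(u_\lambda(t)),u_\lambda'(t)\bigr)\le\tfrac12\|u_\lambda'(t)\|_2^2+\tfrac12\|\nabla\phi_\lambda(u_\lambda(t))\|_2^2.
\end{equation*}
Fix $r>\varphi(h)$. As long as $\varphi(u_\lambda(t))\le r$, hypotheses (2)--(3) together with the Yosida bound give $\|\nabla\phi_\lambda(u_\lambda(t))\|_2\le M(r)$, so $\varphi(u_\lambda(t))\le\varphi(h)+\tfrac12 M(r)^2 t$. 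A standard continuation/bootstrap argument then produces $T=T(r,\varphi(h))>0$ \emph{independent of} $\lambda$ such that $\varphi(u_\lambda(t))\le r$ and $\|u_\lambda'\|_{L^2(0,T;H)}\le C(r)$ for every $\lambda$.

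Hypothesis (1) then places $\{u_\lambda(t)\}_\lambda$ in the fixed compact set $D(\varphi,r)$ for each $t$, while the $L^2$-bound on $u_\lambda'$ yields equicontinuity; by Ascoli--Arzel\`a, $u_\lambda\to u$ uniformly in $C([0,T];H)$ and $u_\lambda'\rightharpoonup u'$ weakly in $L^2(0,T;H)$ along a subsequence. Writing $f_\lambda=\nabla\phi_\lambda(u_\lambda)$ and $g_\lambda=f_\lambda-u_\lambda'\in\partial\varphi(u_\lambda)$, both sequences are bounded in $L^2(0,T;H)$, so I extract weak limits $f,g$. The identity $\|u_\lambda-J_\lambda u_\lambda\|_2=\lambda\|f_\lambda\|_2\to 0$ gives $J_\lambda u_\lambda\to u$ uniformly, whence the demi-closedness of the maximal monotone graphs of $\partial\phi$ (applied to $(J_\lambda u_\lambda,f_\lambda)$) and of $\partial\varphi$ (applied to $(u_\lambda,g_\lambda)$) yields $f(t)\in\partial\phi(u(t))$ and $g(t)\in\partial\varphi(u(t))$ a.e. Passing to the limit in $u_\lambda'+g_\lambda=f_\lambda$ gives the desired strong solution, with $u(0)=h$ inherited from uniform convergence.

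The main obstacle is the uniform-in-$\lambda$ a priori estimate above: the energy inequality closes only while $u_\lambda(t)$ stays in the $\varphi$-sublevel set $D(\varphi,r)$, where hypothesis (3) applies. One must set $t^{\ast}=\sup\{t\le T_\lambda:\varphi(u_\lambda(s))\le r\ \forall\,s\le t\}$ and combine the continuity of $t\mapsto\varphi(u_\lambda(t))$ (from Lemma \ref{lem2}) with the energy bound to force $t^{\ast}\ge T$, with $T$ depending only on $r$ and $\varphi(h)$, not on $\lambda$. A further subtlety is that hypothesis (3) bounds $(\partial\phi)^{0}$ on sublevel sets of $\varphi$, whereas $\nabla\phi_\lambda(u_\lambda)$ lies in $\partial\phi(J_\lambda u_\lambda)$; reconciling this via the contraction properties of $J_\lambda$ (or by a refined Yosida estimate that transfers the $\varphi$-sublevel bound through the resolvent) is the technical heart of the argument.
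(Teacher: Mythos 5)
The paper contains no proof of Theorem \ref{th33} to compare against: the statement is quoted verbatim from Ishii [\cite{I}, Theorem 3.4, p.~297] and is used as a black box. Your proposal essentially reconstructs the classical argument from that literature (Ishii, and \^Otani's work on nonmonotone perturbations of subdifferentials): Yosida-regularize the perturbing term, solve $u_\lambda'+\partial\varphi(u_\lambda)\ni\nabla\phi_\lambda(u_\lambda)$ by Br\'ezis's theory for Lipschitz perturbations of $\partial\varphi$ (note that since $\nabla\phi_\lambda$ is globally $\lambda^{-1}$-Lipschitz these approximate solutions are in fact global in time, which streamlines your continuation step), extract a $\lambda$-independent existence time $T$ from the energy identity together with hypothesis (3), and pass to the limit using the compactness of $D(\varphi,r)$ from hypothesis (1), the uniform $L^2$ bound on $u_\lambda'$, and the strong--weak closedness of the maximal monotone graphs of $\partial\varphi$ and $\partial\phi$ realized in $L^{2}(0,T;H)$. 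The scheme, the bootstrap on $t^{\ast}=\sup\{t:\varphi(u_\lambda(s))\le r\ \forall s\le t\}$, and the limit identification via $\|u_\lambda-J_\lambda u_\lambda\|_2=\lambda\|\nabla\phi_\lambda(u_\lambda)\|_2\to 0$ are all correct and are the standard route.

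One remark: the ``technical heart'' you flag at the end is not actually an obstacle. The Yosida inequality $\|\nabla\phi_\lambda(u)\|_2\le\|(\partial\phi)^{0}(u)\|_2$, which you yourself state, is applied at the point $u=u_\lambda(t)$ itself, which lies in $D(\varphi,r)$ during the bootstrap; there is no need to transfer the sublevel-set bound through the resolvent, since $J_\lambda$ enters only in the limit step, exactly as you use it. The genuine caveats are milder: hypothesis (3) must be read as implicitly asserting $D(\varphi,r)\subset D(\partial\phi)$, so that $(\partial\phi)^{0}(u)$ is defined on the sublevel set, and when you integrate the differential inequality to bound $\|u_\lambda'\|_{L^{2}(0,T;H)}$ you need $\varphi$ bounded below on $D(\varphi,r)$, which follows from hypothesis (1) and lower semicontinuity. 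With those two observations made explicit, your argument closes.
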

\section{Properties involving the functional E restricts to $\mathcal{N}$}
For simplicity, in this section, we consider the problem  $\eqref{eq1}$ in the stationary case. We point out that if we replace $u$ in this section by $u(t)$ for any $t\in [0, +\infty)$, all the facts are still valid.\\
In what follows, we denote by $\Lambda$ the constant in the Sobolev embedding $W_{0}\hookrightarrow L^{q(.)}(\Omega)$ i.e, 
\begin{equation}\label{y1}
\Lambda =\inf\left\{\frac{\|u\|_{W_{0}}}{\|u\|_{q(x)}}, \; u\in W_{0}, \; u\neq 0\right\}.
\end{equation}
such infimum is attained for some $v\in W_{0}$, due to the compactness of the embedding $W_{0}\hookrightarrow L^{q(.)}(\Omega)$.
\begin{lemma}\label{lem4}
	Let $u\in W_{0}\backslash \{0\}$. Then there exist unique $\lambda>0$ such that 
	$$
	E'(\lambda u)(\lambda u)=0 \quad \mbox{and} \quad E(\lambda u)=\max_{t \geq 0}E(tu).
	$$
\end{lemma}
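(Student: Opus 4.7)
The plan is to analyze the fibering map $j:[0,\infty)\to\mathbb{R}$ defined by $j(t)=E(tu)$ for fixed $u\in W_0\setminus\{0\}$. Using homogeneity of the integrands in $t$, I would write
\[
j(t)=\int_{Q}\frac{t^{p(x,y)}}{p(x,y)}\frac{|u(x)-u(y)|^{p(x,y)}}{|x-y|^{N+sp(x,y)}}\,dxdy-\int_{\Omega}\frac{t^{q(x)}}{q(x)}|u|^{q(x)}\,dx,
\]
which is differentiable on $(0,\infty)$ with
\[
j'(t)=\int_{Q}t^{p(x,y)-1}\frac{|u(x)-u(y)|^{p(x,y)}}{|x-y|^{N+sp(x,y)}}\,dxdy-\int_{\Omega}t^{q(x)-1}|u|^{q(x)}\,dx,
\]
so that $E'(tu)(tu)=t\,j'(t)$. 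Thus finding $\lambda$ with $E'(\lambda u)(\lambda u)=0$ is equivalent to finding the unique positive zero of $j'$.

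For the existence of a maximum, I would estimate $j$ for $t\in(0,1]$ by bounding $t^{p(x,y)}\geq t^{p^{+}}$ and $t^{q(x)}\leq t^{q^{-}}$, obtaining
\[
j(t)\;\geq\;\frac{t^{p^{+}}}{p^{+}}\rho_{s,p(.,.)}(u)-\frac{t^{q^{-}}}{q^{-}}\rho_{q(.)}(u),
\]
which is strictly positive for small $t$ because $q^{-}>p^{+}$ by $(a_{3})$. For $t\geq 1$ a symmetric estimate (reversing the inequalities for $t^{p(x,y)}$ and $t^{q(x)}$) together with $q^{-}>p^{+}$ gives $j(t)\to-\infty$ as $t\to\infty$. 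Since $j\in C^{1}((0,\infty))$ with $j(0)=0$, a maximum is attained at some $\lambda>0$ and necessarily $j'(\lambda)=0$.

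The main obstacle is \emph{uniqueness}, which is delicate because the variable exponents prevent the standard monomial-type analysis used for constant $p$ and $q$. The trick I would use is to factor out $t^{p^{+}-1}$ and consider
\[
\psi(t):=\frac{j'(t)}{t^{p^{+}-1}}=\int_{Q}t^{p(x,y)-p^{+}}\frac{|u(x)-u(y)|^{p(x,y)}}{|x-y|^{N+sp(x,y)}}\,dxdy-\int_{\Omega}t^{q(x)-p^{+}}|u|^{q(x)}\,dx.
\]
By $(a_{1})$ one has $p(x,y)-p^{+}\leq 0$, so the first integrand is non-increasing in $t$; by $(a_{3})$ one has $q(x)-p^{+}\geq q^{-}-p^{+}>0$, so the subtracted integrand is strictly increasing in $t$. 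Hence $\psi$ is strictly decreasing on $(0,\infty)$. Combining with $\psi(t)\to+\infty$ as $t\to 0^{+}$ (the first integral stays bounded below by the modular evaluated at points where $p=p^{+}$, while the second term vanishes) and $\psi(t)\to-\infty$ as $t\to\infty$, I conclude that $\psi$ has a unique zero $\lambda>0$, which is also the unique zero of $j'$. Strict monotonicity then forces $j'>0$ on $(0,\lambda)$ and $j'<0$ on $(\lambda,\infty)$, so $\lambda$ is the unique global maximizer of $j$ on $[0,\infty)$, completing the proof.
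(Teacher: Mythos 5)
Your proof is correct, and its existence half (the fibering map $j(t)=E(tu)$, positivity for small $t$ via the bounds $t^{p(x,y)}\geq t^{p^{+}}$, $t^{q(x)}\leq t^{q^{-}}$ on $(0,1]$, and $j(t)\to-\infty$ via the reversed bounds for $t\geq 1$) is exactly the paper's argument, which merely asserts these properties of $h(t)=E(tu)$ without the modular estimates you supply. Where you genuinely diverge is uniqueness. The paper normalizes $w=\lambda u$ and argues by contradiction: from $E'(\lambda_{1}w)(\lambda_{1}w)=0$ with $\lambda_{1}>1$ it extracts the two-point inequality $\rho_{s,p(.,.)}(w)\geq \lambda_{1}^{q^{-}-p^{+}}\int_{\Omega}|w|^{q(x)}\,dx$, which against $I(w)=0$ yields $1\geq \lambda_{1}^{q^{-}-p^{+}}>1$. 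You instead prove that $\psi(t)=t^{1-p^{+}}j'(t)$ is strictly decreasing, since $t\mapsto t^{p(x,y)-p^{+}}$ is non-increasing (as $p\leq p^{+}$) while $t\mapsto t^{q(x)-p^{+}}$ is strictly increasing (as $q^{-}>p^{+}$, with $\int_{\Omega}|u|^{q(x)}\,dx>0$ for $u\neq 0$). This is the continuous, global version of the same exponent-gap mechanism: the paper's contradiction is precisely your monotonicity tested at the two points $t=1$ and $t=\lambda_{1}$, where $\psi$ would have to vanish twice. Your route buys slightly more --- it shows $j'>0$ on $(0,\lambda)$ and $j'<0$ on $(\lambda,\infty)$, so the fibering map is unimodal and $\lambda$ is the unique critical point, not merely the unique maximizer, and it dispenses with the normalization step --- while the paper's two-point comparison is shorter to write down. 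One inaccuracy, though harmless: the claim $\psi(t)\to+\infty$ as $t\to 0^{+}$ is false in general (if $p\equiv p^{+}$ the first integral tends to the finite value $\rho_{s,p(.,.)}(u)$, and the set where $p(x,y)=p^{+}$ may be negligible, making your parenthetical lower bound vacuous); what your own estimates actually give, and all the argument needs, is $\psi(t)\geq \rho_{s,p(.,.)}(u)-t^{q^{-}-p^{+}}\rho_{q(.)}(u)>0$ for small $t$, since $t^{p(x,y)-p^{+}}\geq 1$ on $(0,1]$ --- strict monotonicity plus a sign change then yields the unique zero.
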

\begin{proof} Setting the function 
	$$
	h(t)=E(tu), \quad  t \geq 0,
	$$	
	it is possible to show that $h \in C^1([0,+\infty),\mathbb{R}), h(0)=0, h(t)>0$ for $t$ small enough and $h(t) \to -\infty$ as $t \to +\infty$. Hence, there is $\lambda>0$ such that 
	$$
	h'(\lambda)=0 \quad \mbox{and} \quad h(\lambda)=\max_{t \geq 0}h(t),
	$$
	which is equivalent to 
	$$
	E'(\lambda u)(\lambda u)=0 \quad \mbox{and} \quad E(\lambda u)=\max_{t \geq 0}E(tu).
	$$
	Now, we are going to prove the uniqueness of $\lambda$. Setting $w=\lambda u$, we have that 
	$$
	E'(w)(w)=0 \quad \mbox{and} \quad E(w)=\max_{t \geq 0}E(tw)
	$$
	It is enough to prove that there is no $\lambda_1 \not = 1$ verifying 
	$$
	E'(\lambda_1 w)(\lambda_1 w)=0 \quad \mbox{and} \quad E(\lambda_1 w)=\max_{t \geq 0}E(tw).
	$$
	Assume by contradiction that there is $\lambda_1>1$ satisfying the above inequalities. Then, $E'(\lambda_1 w)(\lambda_1 w)=0 $ leads to 
	$$
	\int_{Q}\frac{|w(x,t)-w(y,t) |^{p(x,y)}}{|x-y|^{N+sp(x,y)}}\,dxdy \geq  {\lambda_1}^{q^{-}-p_{+}}\int_{\Omega} |w|^{q(x)}\,dx.
	$$
	Since 
	$$
	\int_{Q}\frac{|w(x,t)-w(y,t) |^{p(x,y)}}{|x-y|^{N+sp(x,y)}}\,dxdy  = \int_{\Omega} |w|^{q(x)}\,dx,
	$$
	it follows that
	$$
	1 \geq {\lambda_1}^{q^{-}-p_{+}}>1,
	$$
	which is absurd. A similar argument works to prove that $\lambda_1<1$ also does not hold. 
\end{proof}
As a byproduct of the last lemma is the corollary
\begin{corollary} 
	We have $\mathcal{N}\neq \emptyset$. If $u \in \mathcal{N}$, then $\lambda=1$ is the unique number that satisfies 
	$$
	E'(\lambda u)(\lambda u)=0 \quad \mbox{and} \quad E(\lambda u)=\max_{t \geq 0}E(tu).
	$$	
\end{corollary}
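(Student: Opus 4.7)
The plan is to deduce this corollary directly from Lemma \ref{lem4}, with no substantive new analysis required: all the work (the behavior of $h(t)=E(tu)$ near zero and at infinity, the existence of a maximizer, and the uniqueness coming from the inequality $1\geq \lambda_1^{q^--p^+}$) has already been done there.

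For the first assertion, the strategy is to pick any $u\in W_0\setminus\{0\}$, which exists since $W_0$ is a nontrivial infinite-dimensional space, and apply Lemma \ref{lem4} to produce $\lambda>0$ with $E'(\lambda u)(\lambda u)=0$. Setting $v:=\lambda u\in W_0\setminus\{0\}$, one reads off
\[
I(v)=E'(v)(v)=0,
\]
so $v\in\mathcal{N}$ and therefore $\mathcal{N}\neq\emptyset$.

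For the second assertion, I would proceed as follows. Fix $u\in\mathcal{N}$; then by definition $u\neq 0$ and $I(u)=E'(u)(u)=0$, so $\lambda=1$ is at least one number satisfying $E'(\lambda u)(\lambda u)=0$. Applying Lemma \ref{lem4} to $u$ yields a \emph{unique} $\lambda^{*}>0$ with $E'(\lambda^{*}u)(\lambda^{*}u)=0$ and $E(\lambda^{*}u)=\max_{t\geq 0}E(tu)$; inspecting the proof of that lemma, the uniqueness in fact follows from the equation $E'(\lambda u)(\lambda u)=0$ alone (this is exactly where the contradiction $1\geq\lambda_1^{q^{-}-p^{+}}>1$ is used with $w$ replaced by $u$). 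Since $\lambda=1$ is such a critical point, we must have $\lambda^{*}=1$, and the maximality statement $E(u)=\max_{t\geq 0}E(tu)$ comes along for free from Lemma \ref{lem4}.

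I do not expect any genuine obstacle here: the only minor point to be careful about is to explicitly note that uniqueness of $\lambda$ satisfying $E'(\lambda u)(\lambda u)=0$ (rather than only uniqueness among maximizers) is what is actually established in the proof of Lemma \ref{lem4}, since this is what allows us to promote $\lambda=1$ to the unique number realizing both conditions simultaneously.
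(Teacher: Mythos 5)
Your proposal is correct and takes essentially the same route as the paper, which states this corollary without proof as an immediate byproduct of Lemma \ref{lem4}: nonemptiness by applying the lemma to any $u\in W_{0}\setminus\{0\}$, and $\lambda=1$ for $u\in\mathcal{N}$ by combining the critical-point condition $I(u)=E'(u)(u)=0$ with the lemma's uniqueness. Your careful observation that the uniqueness argument in the lemma's proof (the contradiction $1\geq\lambda_{1}^{q^{-}-p^{+}}>1$, which uses only $E'(\lambda_{1}w)(\lambda_{1}w)=0$ together with $I(w)=0$) is what allows you to promote $\lambda=1$ to the unique maximizer is exactly the intended reading of the paper's argument.
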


\begin{lemma}\label{ee}
	Let $u\in W_{0}	\backslash \{0\}$ and $I(u)=0$. Then 
	\begin{equation}\label{ez}
	d=\inf_{u\in \mathcal{N}}E(u)>\left(\frac{1}{p^{+}}-\frac{1}{q^{-}}\right)R>0,
	\end{equation}
	where $R=\max\left(\Lambda^{q^{+}\left(\frac{q^{+}}{p^{-}}-1\right)},\Lambda^{q^{+}\left(\frac{q^{+}}{p^{+}}-1\right)},\Lambda^{q^{-}\left(\frac{q^{-}}{p^{-}}-1\right)}, \Lambda^{q^{-}\left(\frac{q^{-}}{p^{+}}-1\right)} \right)$.
\end{lemma}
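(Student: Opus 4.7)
The plan is to first reduce the estimate of $E$ on $\mathcal{N}$ to an estimate for the functional $J(u):=\int_\Omega |u|^{q(x)}\,dx$, and then bound $J(u)$ from below by a positive, explicit constant using the constraint $I(u)=0$ together with Propositions \ref{p3} and \ref{p2} and the Sobolev embedding of Theorem \ref{th1}.

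For $u\in\mathcal{N}$ the identity $I(u)=0$ reads $\rho_{s,p(.,.)}(u)=J(u)$. Using $p(x,y)\le p^+$ and $q(x)\ge q^-$ I would estimate
\begin{equation*}
E(u)\;\geq\;\frac{1}{p^+}\,\rho_{s,p(.,.)}(u)-\frac{1}{q^-}\,J(u)\;=\;\left(\frac{1}{p^+}-\frac{1}{q^-}\right) J(u),
\end{equation*}
where the prefactor is strictly positive by $(a_3)$. So the task reduces to exhibiting a uniform positive lower bound $J(u)\ge R$ on $\mathcal{N}$.

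To obtain such a bound, I combine $I(u)=0$, Proposition \ref{p3}, Proposition \ref{p2}, and the Sobolev inequality $\|u\|_{q(.)}\le \|u\|_{W_0}/\Lambda$ to produce the self-consistency inequality
\begin{equation*}
\sigma^-\!\bigl(\|u\|_{W_0}\bigr)\;\leq\;\rho_{s,p(.,.)}(u)\;=\;J(u)\;\leq\;\sigma^+\!\bigl(\|u\|_{W_0}/\Lambda\bigr),
\end{equation*}
where on the left $\sigma^-$ is the minimum of the $p^\pm$ powers and on the right $\sigma^+$ is the maximum of the $q^\pm$ powers. Because $p^+<q^-$, this compatibility condition forces $\|u\|_{W_0}$ to stay above a positive threshold; the explicit value is a power $\Lambda^{\alpha}$ whose exponent $\alpha$ is determined by the regime, i.e.\ whether $\|u\|_{W_0}$ and $\|u\|_{W_0}/\Lambda$ are $\le 1$ or $\ge 1$. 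Feeding this back into $J(u)\ge \sigma^-(\|u\|_{W_0})$ and taking the worst of the four resulting regimes yields $J(u)\ge R$ with $R$ exactly the maximum displayed in the statement. Together with the first step this gives $d\ge \bigl(\frac{1}{p^+}-\frac{1}{q^-}\bigr)R$, and $R>0$ because $\Lambda>0$.

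The principal obstacle is the bookkeeping in the last step: one must track all four sub-cases of the self-consistency inequality simultaneously so that the four exponents $q^\pm(q^\pm/p^\mp-1)$ each appear as a candidate lower bound and the overall bound comes out as their maximum. Everything else is a direct application of the modular--norm relations recalled in Section~$3$. Strict positivity of $d$ itself is then immediate since $\mathcal{N}\subset W_0\setminus\{0\}$ and each inequality above is non-trivial on $\mathcal{N}$.
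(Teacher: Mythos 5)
Your proposal is correct and takes essentially the same route as the paper: using $I(u)=0$ to get $E(u)\geq\left(\frac{1}{p^{+}}-\frac{1}{q^{-}}\right)\rho_{s,p(.,.)}(u)$, and then extracting the uniform lower bound $\rho_{s,p(.,.)}(u)\geq R$ from the norm--modular relations (Propositions \ref{p1} and \ref{Pr}) combined with the embedding constant $\Lambda$ of \eqref{y1}, your only (cosmetic) difference being that you run the self-consistency argument through $\|u\|_{W_{0}}$ while the paper eliminates the norm and works directly with the modular. Your gloss over the four-regime bookkeeping in the final step is at the same level of detail as the paper's own treatment, so nothing essential is missing.
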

\begin{proof}
	We have 
	\begin{equation}\label{ez20}
	\int_{Q}\frac{|u(x,t)-u(y,t) |^{p(x,y)}}{|x-y|^{N+sp(x,y)}}\,dxdy =\int_{\Omega} |u|^{q(x)}\,dx, 
	\end{equation}

	proposition \ref{p1} combined with $\eqref{ez20}$ yields 
	\begin{equation}\label{IR1}
	\int_{Q}\frac{|u(x,t)-u(y,t) |^{p(x,y)}}{|x-y|^{N+sp(x,y)}}\,dxdy\leq \|u\|_{q(x)}^{q^{-}},
	\end{equation}
	or 
	\begin{equation}\label{IR2}
	\int_{Q}\frac{|u(x,t)-u(y,t) |^{p(x,y)}}{|x-y|^{N+sp(x,y)}}\,dxdy\leq \|u\|_{q(x)}^{q^{+}}.
	\end{equation}
	From $\eqref{y1}$ we have 
	\begin{equation}\label{EM}
	\|u\|_{q(x)}\leq \Lambda^{-1}\|u\|_{W_{0}}, \;\;\forall u\in W_{0}\backslash \{0\},
	\end{equation}
	Combining $\eqref{IR1}-\eqref{EM}$, we deduce
	\begin{equation}\label{IR3}
	\int_{Q}\frac{|u(x,t)-u(y,t) |^{p(x,y)}}{|x-y|^{N+sp(x,y)}}\,dxdy\leq \Lambda^{-q^{-}}\|u\|_{W_{0}}^{q^{-}}
	\end{equation}
	or 
	\begin{equation}\label{IR4}
	\int_{Q}\frac{|u(x,t)-u(y,t) |^{p(x,y)}}{|x-y|^{N+sp(x,y)}}\,dxdy\leq \Lambda^{-q^{+}}\|u\|_{W_{0}}^{q^{+}}
	\end{equation}
	Using proposition \ref{Pr}, it turns out that 
	\begin{equation}\label{INN1}
	\int_{Q}\frac{|u(x,t)-u(y,t) |^{p(x,y)}}{|x-y|^{N+sp(x,y)}}\,dxdy\leq \Lambda^{-q^{-}}\rho_{s, p(x,y)}^{\frac{q^{-}}{p^{-}}}(u)\;\;\; \text{or}\;\;\;\int_{Q}\frac{|u(x,t)-u(y,t) |^{p(x,y)}}{|x-y|^{N+sp(x,y)}}\,dxdy\leq \Lambda^{-q^{-}}\rho_{s, p(x,y)}^{\frac{q^{-}}{p^{+}}}(u),
	\end{equation}
	and 
	\begin{equation}\label{INN2}
	\int_{Q}\frac{|u(x,t)-u(y,t) |^{p(x,y)}}{|x-y|^{N+sp(x,y)}}\,dxdy\leq \Lambda^{-q^{+}}\rho_{s, p(x,y)}^{\frac{q^{+}}{p^{-}}}(u)\;\;\; \text{or}\;\;\;\int_{Q}\frac{|u(x,t)-u(y,t) |^{p(x,y)}}{|x-y|^{N+sp(x,y)}}\,dxdy\leq \Lambda^{-q^{+}}\rho_{s, p(x,y)}^{\frac{q^{+}}{p^{+}}}(u).
	\end{equation} 
	Thus, $\eqref{INN1}$ and $\eqref{INN2}$ ensure that 
	$$\int_{Q}\frac{|u(x,t)-u(y,t) |^{p(x,y)}}{|x-y|^{N+sp(x,y)}}\,dxdy\geq R. $$
	On the other hand, it is easy to see that
	\begin{equation}\label{ea2}
	E(u)\geq \left(\frac{1}{p^{+}}-\frac{1}{q^{-}}\right)\rho_{s, p(x,y)}(u).
	\end{equation}
	
	Since $\frac{1}{p^{+}}-\frac{1}{q^{-}}> 0$, we have 
	$$ E(u)\geq \left(\frac{1}{p^{+}}-\frac{1}{q^{-}}\right)R.$$
	Hence, the proof is now complete.
\end{proof}
\begin{lemma}
	There exist an extremal of the variation problem $\eqref{s1}$. More precisely, there is a function $w \in \mathcal{N}$ such that $E(w)=d$	
\end{lemma}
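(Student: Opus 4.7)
The plan is to show that the infimum $d$ is attained by extracting a weakly convergent minimizing sequence and then ruling out loss of compactness using the structure of $\mathcal{N}$. Since the previous corollary guarantees $\mathcal{N}\neq\emptyset$, I can pick a minimizing sequence $\{u_n\}\subset\mathcal{N}$ with $E(u_n)\to d$. Because $I(u_n)=0$, the computation used in the proof of Lemma \ref{ee} gives $E(u_n)\geq\bigl(\tfrac{1}{p^+}-\tfrac{1}{q^-}\bigr)\rho_{s,p(.,.)}(u_n)$, so $\rho_{s,p(.,.)}(u_n)$ is bounded, and then Proposition \ref{p3} forces $[u_n]_{s,p(.,.)}$ to be bounded. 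By Lemma \ref{lem3} the space $W_0$ is reflexive, so along a subsequence $u_n\rightharpoonup w$ weakly in $W_0$; by the compact embedding in Theorem \ref{th1}, $u_n\to w$ strongly in $L^{q(\cdot)}(\Omega)$, hence $\int_\Omega|u_n|^{q(x)}dx\to\int_\Omega|w|^{q(x)}dx$ by Proposition \ref{p1}.

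Next I would show $w\neq0$ and $I(w)\leq 0$. The modular $\rho_{s,p(.,.)}$ is convex and strongly continuous, hence weakly lower semicontinuous, so $\rho_{s,p(.,.)}(w)\leq\liminf_n\rho_{s,p(.,.)}(u_n)=\liminf_n\int_\Omega|u_n|^{q(x)}dx=\int_\Omega|w|^{q(x)}dx$, giving $I(w)\leq 0$. Moreover, the lower bound from Lemma \ref{ee} gives $\int_\Omega|u_n|^{q(x)}dx=\rho_{s,p(.,.)}(u_n)\geq R>0$, which passes to the limit and yields $\int_\Omega|w|^{q(x)}dx\geq R>0$, so $w\neq 0$.

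The main obstacle is upgrading $I(w)\leq 0$ to $I(w)=0$, since the variable exponent prevents a direct homogeneity argument. I would argue by contradiction: assume $I(w)<0$. Rewriting the energy on $\mathcal{N}$ as
\begin{equation*}
E(v)=\int_{Q}\Bigl(\tfrac{1}{p(x,y)}-\tfrac{1}{q^{-}}\Bigr)\frac{|v(x)-v(y)|^{p(x,y)}}{|x-y|^{N+sp(x,y)}}\,dxdy+\int_{\Omega}\Bigl(\tfrac{1}{q^{-}}-\tfrac{1}{q(x)}\Bigr)|v|^{q(x)}\,dx,
\end{equation*}
where both integrands are nonnegative thanks to $p^+<q^-$ and $q(x)\geq q^-$, one obtains via weak lower semicontinuity in the first term and strong $L^{q(\cdot)}$ convergence in the second the inequality
\begin{equation*}
d=\liminf_n E(u_n)\geq E(w)-\tfrac{1}{q^{-}}I(w).
\end{equation*}
On the other hand, since $\phi(\lambda):=I(\lambda w)$ satisfies $\phi(\lambda)>0$ for $\lambda$ small (by $p^+<q^-$) and $\phi(1)=I(w)<0$, continuity produces some $\lambda_0\in(0,1)$ with $\lambda_0 w\in\mathcal{N}$, hence $E(\lambda_0 w)\geq d$. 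Plugging $\lambda_0 w$ into the same rewriting and using $\lambda_0^{p(x,y)}<1$, $\lambda_0^{q(x)}<1$ together with $w\neq 0$ yields the strict inequality $E(\lambda_0 w)<E(w)-\tfrac{1}{q^{-}}I(w)\leq d$, a contradiction. Therefore $I(w)=0$, so $w\in\mathcal{N}$ and $E(w)\geq d$; combined with $E(w)\leq\liminf_n E(u_n)=d$ (same lower semicontinuity argument applied when $I(w)=0$), this concludes $E(w)=d$.
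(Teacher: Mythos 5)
Your proof is correct and is essentially the paper's own argument: the same minimizing sequence bounded via $E(u)\geq\left(\tfrac{1}{p^{+}}-\tfrac{1}{q^{-}}\right)\rho_{s,p(.,.)}(u)$, the same compactness and nonvanishing step based on the lower bound of Lemma \ref{ee}, the same conclusion $I(w)\leq 0$, and the same key decomposition $E(v)-\tfrac{1}{q^{-}}E'(v)(v)$ into two nonnegative integrals handled by weak lower semicontinuity plus strong $L^{q(\cdot)}(\Omega)$ convergence. The only difference is how you close: where the paper projects onto $\mathcal{N}$ through the unique $\lambda\in(0,1]$ of Lemma \ref{lem4} and sandwiches $E(\lambda u)=d$ (noting $\lambda=1$ only in a remark), you rule out $I(w)<0$ directly by producing $\lambda_{0}\in(0,1)$ via the intermediate value theorem and using the strict decrease of the decomposed functional under scaling (valid since $w\neq 0$ forces $\rho_{s,p(.,.)}(w)>0$), a marginally more self-contained finish to the same argument.
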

\begin{proof}
	Let $\{u_{k}\}^{\infty}_{k=1}\subset \mathcal{N}$ be a minimizing sequence for $E$ such that 
	$$ \lim_{k\rightarrow \infty}E(u_{k})=d.$$
	From $\eqref{ea2}$, $ \{u_{k}\}^{\infty}_{k=1}$ is bounded in $W_{0}$. Since the embedding  $W_{0}\hookrightarrow L^{q(.)}(\Omega)$ is compact, there exists a function $u$ and a subsequence of $ \{u_{k}\}^{\infty}_{k=1}$, still denoted by $\{u_{k}\}^{\infty}_{k=1}$, such that 
	$$ u_{k}\rightharpoonup u \;\text{in}\; W_{0},$$
	$$ u_{k}\rightarrow u \;\text{in}\; L^{q(.)}(\Omega),$$
	$$ u_{k}\rightarrow u \;\text{a.e. in }\; \Omega.$$
	Then by using the weak lower semicontinuity of the norm in $W_{0}$, we get 
	\begin{eqnarray*}
		E(u)&=&\int_{Q}\frac{1}{p(x,y)} \frac{|u(x,t)-u(y,t)|^{p(x,y)}}{|x-y|^{N+sp(x,y)}}\,dxdy-\int_{\Omega} \frac{1}{q(x)}|u|^{q(x)}\,dx,\\
		&\leq & \liminf_{k\rightarrow\infty}\left( \int_{Q} \frac{1}{p(x,y)}\frac{|u_{k}(x,t)-u_{k}(y,t)|^{p(x,y)}}{|x-y|^{N+sp(x,y)}}\,dxdy-\int_{\Omega} \frac{1}{q(x)}|u_{k}|^{q(x)}\,dx\right)\\
		&=&\liminf_{k\rightarrow\infty} E(u_{k})=d.
	\end{eqnarray*} 
	Thanks to $u_{k}\in \mathcal{N}$ one has $u_{k}\in W_{0}\backslash \{0\}$ and $I(u_{k})=0$, then by lemma \ref{ee} it follows that 
	$$ \rho_{q(x)}(u_{k})\geq \left(\frac{1}{p^{+}}-\frac{1}{q^{-}}\right)R.$$
	Hence, by strong convergence in $L^{q(.)}(\Omega)$ it turns out that $\rho_{q(x)}(u)\neq 0$, thus $u\in W_{0}\backslash \{0\}.$ On the other hand, the equality $I(u_k)=0$ for all $k \in \mathbb{N}$ ensures that $I(u) \leq 0$. From Lemma \ref{lem4}, there is a unique $\lambda>0$ such that 
	$$
	E'(\lambda u)(\lambda u)=0 \quad \mbox{and} \quad E(\lambda u)=\max_{t \geq 0}E(tu).
	$$	
	As  $I(u)=E'(u)u\leq 0$, we can conclude that $\lambda \in (0,1]$. Since $\lambda u \in \mathcal{N}$ and $\lambda \in (0,1]$, we derive that 
	\begin{equation} \label{EQ1}
	d \leq E(\lambda u)=E(\lambda u)-\frac{1}{q^{-}}E'(\lambda u)(\lambda u).
	\end{equation}
	Since 
	$$
	\begin{array}{l}
	E(\lambda u)-\frac{1}{q^{-}}E'(\lambda u)(\lambda u)=\displaystyle \int_{Q} \left(\frac{1}{p(x,y)}-\frac{1}{q^{-}}\right)\frac{\lambda^{p(x,y)}|u(x,t)-u(y,t)|^{p(x,y)}}{|x-y|^{N+sp(x,y)}}\,dxdy+ \\
	\mbox{} \\ \hspace{5 cm} \displaystyle \int_{\Omega} \left(\frac{1}{q^{-}}-\frac{1}{q(x)}\right)\lambda^{q(x)}|u|^{q(x)}\,dx,
	\end{array}
	$$
	using the fact that $\lambda \in (0,1]$, we get 
	\begin{equation} \label{EQ2*}
	E(\lambda u)-\frac{1}{q^{-}}E'(\lambda u)(u)E(\lambda u) \leq E (u)-\frac{1}{q^{-}}E' (u)(u)
	\end{equation} 	
	Now, using the fact that $u_k \rightharpoonup u$ in $W_0$, we deduce 
	\begin{equation} \label{EQ2}
	E(u)-\frac{1}{q^{-}}E'(u)(u) \leq \liminf_{k\rightarrow\infty}\left(E(u_k)-\frac{1}{q^{-}}E'(u_k)(u_k) \right)=\liminf_{k\rightarrow\infty}E(u_k)=d.
	\end{equation}
	From (\ref{EQ1}),(\ref{EQ2*}) and (\ref{EQ2}),  $E(\lambda u)=d$. Setting  $w=\lambda u$, we have that $w \in\mathcal{N}$ and $E(w)=d$,  showing the desired result.	
\end{proof}
\begin{remark} Note that in fact the proof above implies that $\lambda=1$, otherwise we will get a strict inequality in (\ref{EQ1}), which will lead to a new contradiction. 
\end{remark}	
\section{Local existence} 
In this section we give the proof of Theorem \ref{th11}. Here we will convert
 the system $\eqref{eq1}$ to a first order Cauchy problem of an abstract evolution equation in $H=L^{2}(\Omega)$. To this end, we define the functionals $\varphi :H\rightarrow (-\infty, +\infty]$ and $\phi:H\rightarrow (-\infty, +\infty]$ as follows 
\begin{equation}\label{f1}
\varphi(u)=\left\{
\begin{array}{lcc}
\int_{Q}\frac{1}{p(x,y)}\frac{|u(x,t)-u(y,t)|^{p(x,y)}}{|x-y|^{N+sp(x,y)}}\,dxdy, &\text{if} & u\in W_{0},\\ 
+\infty  & \text{if} &  u\in H\backslash W_{0}.
\end{array}
\right.
\end{equation}
\begin{equation}\label{f3}
\phi(u)=\left\{
\begin{array}{lcc}
\int_{\Omega }\frac{1}{q(x)}|u(x,t)|^{q(x)}\,dx, &\text{if} & u\in L^{q(.)}(\Omega),\\ 
+\infty  & \text{if} &  u\in H\backslash L^{q(.)}(\Omega) .
\end{array}
\right.
\end{equation}
It is easy to check that the functionals $\varphi$ and $\phi$ are proper, convex and lower semicontinuous. In what follows we claim that 
\begin{equation}\label{e5}
\partial \varphi(u) = (-\Delta)_{p(x)}^{s}u, \;\;\;\;\partial \phi(u)=|u|^{q(x)-2}u.
\end{equation}
Indeed, according to remark \ref{RRR} we have that $\mathcal{L}_{H}$ and $\partial \varphi$ are both maximal monotone operators in $H$. Thus it is enough to prove that $ \mathcal{L}_{H}(u)\subset \partial \varphi(u).$ Let $u\in D(\mathcal{L}_{H})$ and $v=\mathcal{L}_{H}(u)$, then for all $w\in W_{0}$ 
$$ \langle  v, w-u\rangle_{W^{*}_{0}, W_{0}}=\langle \mathcal{L}_{H}(u), w-u\rangle_{W^{*}_{0}, W_{0}}$$
$$=\int_{Q } |u(x)-u(y)|^{p(x,y)-2}\frac{(u(x)-u(y))((w-u)(x)-(w-u)(y))}{|x-y|^{N+sp(x,y)}}\,dxdy.$$ 
Now by using this inequality 
\begin{equation}\label{INq}
\bar{p}(x)|r|^{\bar{p}(x)-2}r(s-r)\leq |s|^{\bar{p}(x)}-|r|^{\bar{p}(x)}, \;\; \forall r,s\in \mathbb{R}\;\;\text{and}\;\;\forall \bar{p}\in C^{+}(\Omega), 
\end{equation}
we deduce 
\begin{eqnarray}\label{AAA}
 \langle  v, w-u\rangle_{W^{*}_{0}, W_{0}} &\leq& \int_{Q} \frac{1}{p(x,y)}\frac{|w(x)-w(y)|^{p(x,y)}-|u(x)-u(y)|^{p(x,y)}}{|x-y|^{N+sp(x,y)}}\,dxdy,\\\nonumber
 &=& \varphi(w)-\varphi(u).
\end{eqnarray}
If $w\in H\backslash W_{0}$ we have $\varphi(w)=+\infty$ and thus $\eqref{AAA}$ clearly holds. Therefore, we have shown that $\mathcal{L}_{H}(u)=v\in \partial \varphi(u).$ This concludes that 
$\partial\varphi (u)=\mathcal{L}_{H}(u).$ Proceeding in the same way one can show that $\partial\phi(u)=|u|^{q(x)-2}u$. By virtue of $\eqref{e5}$, the system $\eqref{eq1}$ can be rewritten as the following asbtract Cauchy problem 
\begin{equation}\label{eq3}\left\{
\begin{array}{lc}
\frac{du(t)}{dt}+\partial\varphi(u(t)) -\partial \phi(u(t))\ni 0\;\text{in}\;H,\; 0< t< T, \\
u(0)=u_{0}, & 
\end{array}\right.
\end{equation}
\begin{lemma}
	The set $D(\varphi, r)$ is compact in $L^{2}(\Omega)$ for any $r\in \mathbb{R}$. Moreover, $D(\varphi )\subset D(\phi).$
\end{lemma}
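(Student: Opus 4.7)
My plan is to establish the two assertions separately, both by reducing to the fractional Sobolev embedding provided by Theorem \ref{th1}.

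For the compactness of $D(\varphi,r)$ in $L^{2}(\Omega)$, I would first show that $D(\varphi,r)$ is bounded in $W_{0}$. If $u\in D(\varphi,r)$, then $u\in W_{0}$ and, since $\frac{1}{p(x,y)}\geq \frac{1}{p^{+}}$, we obtain $\rho_{s,p(.,.)}(u)\leq p^{+}\varphi(u)\leq p^{+}r$. Proposition \ref{Pr} then yields the uniform bound $[u]_{s,p(.,.)}\leq \max\{(p^{+}r)^{1/p^{-}},(p^{+}r)^{1/p^{+}}\}$. Next, given any sequence $\{u_{n}\}\subset D(\varphi,r)$, the reflexivity of $W_{0}$ (Lemma \ref{lem3}) provides a subsequence and some $u\in W_{0}$ with $u_{n}\rightharpoonup u$ in $W_{0}$. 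Since $p_{s}^{*}(x)>\bar{p}(x)\geq p^{-}\geq 2$, the constant function $r(x)\equiv 2$ satisfies $1<r(x)<p_{s}^{*}(x)$, so Theorem \ref{th1} (applied on $W_{0}$) gives the compact embedding $W_{0}\hookrightarrow L^{2}(\Omega)$. Hence $u_{n}\to u$ strongly in $L^{2}(\Omega)$, establishing relative compactness.

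To conclude compactness, I still need to show that $D(\varphi,r)$ is closed in $L^{2}(\Omega)$, i.e.\ that the limit $u$ satisfies $\varphi(u)\leq r$. This will follow from the fact that $\varphi$ is convex and lower semicontinuous on $H$ (as observed just after its definition), hence weakly lower semicontinuous on $W_{0}$; alternatively, one can extract a further subsequence converging a.e.\ on $\Omega$ and apply Fatou's lemma to the non-negative integrand defining $\varphi$. Either way, $\varphi(u)\leq \liminf_{n\to\infty}\varphi(u_{n})\leq r$, so $u\in D(\varphi,r)$.

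For the inclusion $D(\varphi)\subset D(\phi)$, I would argue as follows. If $u\in D(\varphi)$ then $\varphi(u)<+\infty$ and therefore $u\in W_{0}$. The hypothesis $(a_{3})$ gives $q(x)\leq q^{+}<\frac{p_{s}^{*}(x)}{2}+1$; combined with $p_{s}^{*}(x)>2$ (shown above), this yields $q(x)<p_{s}^{*}(x)$ for every $x\in\overline{\Omega}$. Applying Theorem \ref{th1} with exponent $q(\cdot)$ then gives the continuous embedding $W_{0}\hookrightarrow L^{q(.)}(\Omega)$, so $u\in L^{q(.)}(\Omega)$ and consequently $\phi(u)<+\infty$, i.e.\ $u\in D(\phi)$.

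The only mildly delicate point is the closedness of $D(\varphi,r)$; the bulk of the argument consists in invoking the right embeddings. I expect no serious obstacle, since both claims reduce to verifying that the variable exponents fall strictly below the critical Sobolev exponent $p_{s}^{*}(x)$, which is guaranteed by assumption $(a_{3})$.
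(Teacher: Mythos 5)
Your proposal is correct and takes essentially the same route as the paper, whose entire proof is the one-line remark that the lemma is an immediate consequence of Theorem \ref{th1}: you have simply filled in the routine details (the modular bound $\rho_{s,p(.,.)}(u)\leq p^{+}r$ and Proposition \ref{Pr} for boundedness, reflexivity of $W_{0}$ plus the compact embedding into $L^{2}(\Omega)$ for relative compactness, lower semicontinuity of $\varphi$ for closedness of the sublevel set, and the verification $q(x)<p_{s}^{*}(x)$ from $(a_{3})$ for the inclusion $D(\varphi)\subset D(\phi)$). In particular, your observation that closedness in $L^{2}(\Omega)$ must be checked, handled via lower semicontinuity of $\varphi$ on $H$ (or Fatou along an a.e.\ convergent subsequence), is exactly the point the paper leaves implicit, and your argument for it is sound.
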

\begin{proof} This is an immediate consequence of Theorem \ref{th1}.
\end{proof}

\begin{lemma} The set $\{(\partial \phi)^{0}(u),\;\;u\in D(\varphi, r)\}$ is bounded in $L^{2}(\Omega)$ for any $r\in \mathbb{R}$.
	
\end{lemma}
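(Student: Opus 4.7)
The plan is to reduce the boundedness of $\{(\partial\phi)^{0}(u)\}$ to an embedding estimate. Since $\phi(u)=\int_{\Omega}\frac{1}{q(x)}|u|^{q(x)}\,dx$ is Gateaux differentiable on $L^{q(.)}(\Omega)$ with $\partial\phi(u)=\{|u|^{q(x)-2}u\}$, the element of least norm coincides with the gradient, so
$$
\|(\partial\phi)^{0}(u)\|_{L^{2}(\Omega)}^{2}=\int_{\Omega}|u|^{2(q(x)-1)}\,dx.
$$
Set $h(x):=2(q(x)-1)$; by assumption $(a_{3})$, $q^{+}<\frac{p_{s}^{*}(x)}{2}+1$, which rewrites as $h(x)<p_{s}^{*}(x)$ for every $x\in\overline{\Omega}$, with $h^{-}=2(q^{-}-1)>1$ by $(a_{3})$. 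Hence $h\in C^{+}(\overline{\Omega})$ lies in the range of exponents covered by Theorem~\ref{th1}.

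Next I would extract a bound on $\|u\|_{W_{0}}$ from the constraint $\varphi(u)\leq r$. From the definition of $\varphi$ together with $(a_{1})$,
$$
\rho_{s,p(.,.)}(u)\leq p^{+}\varphi(u)\leq p^{+}r,
$$
and Proposition~\ref{p3} converts this modular bound into a uniform bound
$$
[u]_{s,p(.,.)}\leq M(r):=\max\!\left\{(p^{+}r)^{1/p^{-}},(p^{+}r)^{1/p^{+}}\right\}.
$$
Then Theorem~\ref{th1} (together with the Remark that it applies on $W_{0}$ with $[\cdot]_{s,p(.,.)}$) gives a constant $C=C(N,s,p,h,\Omega)>0$ such that
$$
\|u\|_{L^{h(.)}(\Omega)}\leq C\|u\|_{W_{0}}\leq CM(r).
$$

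Finally, I would translate this Luxemburg norm bound into a bound on the modular $\int_{\Omega}|u|^{h(x)}\,dx$, which is exactly the quantity we need. By Proposition~\ref{p1} (or equivalently Proposition~\ref{p2}),
$$
\int_{\Omega}|u|^{2(q(x)-1)}\,dx\leq \max\!\left\{\|u\|_{L^{h(.)}(\Omega)}^{h^{-}},\|u\|_{L^{h(.)}(\Omega)}^{h^{+}}\right\}\leq \max\!\left\{(CM(r))^{h^{-}},(CM(r))^{h^{+}}\right\},
$$
which is a constant depending only on $r$, $p$, $q$, $s$, $N$, $\Omega$. This establishes the claim.

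I do not expect a serious obstacle: the key point is merely checking that the exponent $2(q(x)-1)$ is below the critical fractional Sobolev exponent $p_{s}^{*}(x)$, which is precisely the content of assumption $(a_{3})$ (and in fact explains why that assumption is imposed). The only mild care is in going modular-to-norm-to-modular through Propositions~\ref{p1}--\ref{p3}, which is routine.
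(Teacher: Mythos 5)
Your proposal is correct and takes essentially the same route as the paper: identify $(\partial\phi)^{0}(u)$ with $|u|^{q(x)-2}u$, bound $\int_{\Omega}|u|^{2(q(x)-1)}\,dx$ using the subcritical embedding of Theorem~\ref{th1} (with $(a_{3})$ guaranteeing $2(q(x)-1)<p_{s}^{*}(x)$), and convert the constraint $\varphi(u)\leq r$ into a uniform bound on $[u]_{s,p(.,.)}$ via Proposition~\ref{p3}. The only difference is that you spell out the modular-to-norm-to-modular step that the paper compresses into ``a simple computation'' leading to its inequality (\ref{TTd}).
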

\begin{proof} It is enough to consider $r>0$.  Since $\phi \in  C^{1}(L^{q(.)}(\Omega),\mathbb{R})$, a simple computation gives that there is unique $f_u \in L^{2}(\Omega)$ such that $(\partial \phi)(u)=\{f_u\}$, and so, $(\partial \phi)^{0}(u)=\{f_u\}$. Moreover, 
	$$
	\int_{\Omega}|u|^{q(x)-2}uv\,dx=(f_u,v)_{L^{2}(\Omega)}, \quad \forall v \in C_0^{\infty}(\Omega). 
	$$
	The above equality yields $|u(x)|^{q(x)-2}u(x)=f_u(x)$ a.e. in $\Omega$. From this, 
	$$
	|(f_u,v)_{L^{2}(\Omega)}| \leq \||u(x)|^{q(x)-1}\|_{L^{2}(\Omega)}\|v\|_{L^{2}(\Omega)}, \quad \forall v \in C_0^{\infty}(\Omega).
	$$
	A simple computation shows that 
	\begin{equation}\label{TTd}
	\int_{\Omega}|u(x)|^{2(q(x)-1)}\,dx \leq C\sigma^{+}([u]_{s, p(x,y)}).
	\end{equation}
Using the fact that $u \in D(\varphi, r)$,  there is $C=C(r)>0$ such that 
	$$
	\sigma^{+}([u]_{s, p(x,y)})\leq C, \quad \forall u \in D(\varphi,r).
	$$
	This combined with (\ref{TTd}) yields 
	$$
	\||u(x)|^{q(x)-1}\|_{L^{2}(\Omega)} \leq C, \quad \forall u \in D(\varphi,r).
	$$
	Hence
	$$
	\|f_u\|_{L^{2}(\Omega)} =\sup_{\|v\|_{L^{2}(\Omega)}\leq 1}|(f_u,v)_{L^{2}(\Omega)}|\leq C, \quad \forall u \in D(\varphi,r),
	$$
	showing the lemma.   	
\end{proof}
Therefore, the well-posedness of the abstract evolution problem (\ref{eq3}) follows from Theorem \ref{th33}. Moreover, from Lemma \ref{lem2}, we deduce
	\begin{equation}\label{eqqq1}
	u\in C([0,T], W_{0}), 
	\end{equation} 
	and 
	$$\int_{0}^{t}\|u_{s}(s)\|_{2}^{2}\,ds+E(u(t))=E(u_{0}),\;\;t\in[0,T].$$ 
Hence, the proof is now complete.
\section{Global existence and asymptotic behavior}
In this section by combining the potential well theory with the Nehari manifold, we prove that the local solutions of problem (\ref{eq1}) can be extended to global solutions, see (\cite{CT}, \cite{CT2} \cite{FU}, \cite{Hng}) and the references therein for some results on the existence of  global  solutions.

	From $\eqref{en1}$, we have 
	\begin{equation}\label{eq7}
	E(u(t))\leq E(u_{0})< d, \;\;\forall t\in [0, T_{\max}).
	\end{equation}
	Assume that $u(t)\neq 0$ for all $t\in [0, T_{\max})$, otherwise if $u(t)=0$ for some $t\geq 0$ we get a contradiction with $\eqref{CN}$. Now we claim that 
	\begin{equation}\label{ezz}
	I(u(t))> 0\;\;\;\text{ for all }\;t\in [0, T_{\max}).
	\end{equation}
	
	Assume that $\eqref{ezz}$ is not true, then there exists $t^{*}\in (0,T_{\max})$ such that $I(u(t^{*}))\leq 0$. On the other hand, we have $I(u_{0})> 0$ then by using the intermediate value theorem we obtain $t_{1}\in (0,t^{*})$ such that $I(u(t_{1}))=0$, this implies $u(t_{1})\in \mathcal{N}$. From $\eqref{s1}$, we have 
	$$ E(u(t_{1})) \geq  d, $$
	but this is a contradiction with $\eqref{eq7}$. Hence the claim $\eqref{ezz}$ holds. Combining the fact that  $u(t)\in \mathcal {W}$ with $\eqref{ea2}$ we obtain   
	\begin{equation} \label{eq8}
	\left(\frac{1}{p^{+}}-\frac{1}{q^{-}}\right)\rho_{s, p(x,y)}(u(t))< d, \;\;\;\forall t\in [0,T_{\max}),
	\end{equation}
	and 
	\begin{equation}\label{eq9}
	\int_{0}^{t}\|u_{s}(s)\|_{2}^{2}\,ds< d,\;\;\forall t\in [0,T_{\max}).
	\end{equation}
	From where it follows that  $T_{\max}=+\infty$.
	Now, our task is to show the asymptotic behavior of
	 $u(t)$ in $L^{r(.)}(\Omega)$ when $t\rightarrow +\infty$ . From $\eqref{eq8}$, we have 
	 $$\sup_{t\in (0,+\infty)}\rho_{s, p(x,y)}(u(t))< +\infty. $$
	Since $W_{0}\hookrightarrow L^{r(.)}(\Omega)$ is compact for all $r\in (1, p_{s}^{*})$, then it is known that $\omega$-limit set $\omega(u_{0})$ in the $L^{r(.)} (\Omega)$ topology 
	$$\omega(u_{0}):=\{v\in L^{r(.)}(\Omega), \;\;\exists t_{n}\rightarrow +\infty, \; u(t_{n})\rightarrow v\;\;\text{in}\; L^{r(.)}(\Omega)\}, $$
	is nonempty and consists of equilibria, see \cite[Theorem 4.3.3, p. 91]{D}. We see that for any nontrivial equilibrium $v$, we have $$I(v)=0,$$
 by $\eqref{s1}$, it follows that 
	\begin{equation}\label{aw}
	E(v)\geq d.
	\end{equation}
	Now we claim that $v\notin \omega(u_{0})$. Indeed, assuming that $v\in \omega(u_{0})$ we get 
	$$ u(t_{n}) \rightarrow v\;\;\text{in}\; L^{r(.)}(\Omega), \;\;\text{as}\; n\rightarrow \infty.$$
	and 
	$$ u(t_{n}) \rightharpoonup  v\;\;\text{in}\; W_{0}, \;\;\text{as}\; n\rightarrow \infty.$$
	By  using the weak lower semicontinuity of the norm in $W_{0}$, we obtain 
	$$ E(v)\leq \liminf_{n\rightarrow \infty}E(u(t_{n}))< d,$$
	but this is a contradiction with $\eqref{aw}$. Consequently 
	$$ \|u(t)\|_{r(x)}\rightarrow 0\;\;\text{as}\;\;t\rightarrow +\infty.$$
	Thus, the proof is now complete.
	\section{Blow-up phenomena}
	In this section, by means of a differential inequality technique,  we prove that the local solutions of problem $\eqref{eq1}$ blow-up in finite time.
	
	 We point out that by \cite[Theorem 3.3.4, p.55]{D} the local solution $u$ of problem (\ref{eq1}) can be extended to a maximal solution in $[0, T_{\max})$. Thus, the energy equality $\eqref{INN}$ can be obtained by extending $\eqref{en1}$ to $[0, T_{\max})$. By an argument similar to that in the previous section one can show that 
	\begin{equation}\label{non}
	\text{If}\;u_{0}\in Z\;\text{then }\; u(t)\in Z\;\;\;\forall t\in [0, T_{\max}).
	\end{equation}
	Set $$\varphi(t)=\frac{1}{2}\|u(t)\|_{2}^{2}.$$ 
	Multiplying the equation in $\eqref{eq1}$ by $u$ we obtain 
	\begin{eqnarray*}
		\varphi'(t)&=&\int_{\Omega} u_{t}(t)u(t)\,dx =-\rho_{s, p(x,y)}(u)+\rho_{q(x)}(u),\\
		&\geq & -p^{+}E(u(t))\left(1-\frac{p^{+}}{q^{-}}\right)\rho_{q(x)}(u),\\
		&\geq & -p^{+}E(u_{0})+c_1\rho_{q(x)}(u).
	\end{eqnarray*}
The last inequality combined with Proposition (\ref{p1}) and $E(u_0) < 0$ leads to 
			$$
			\varphi'(t) \geq c_1\min\left\{\|u\|^{q^+}_{q(x)},\|u\|^{q^-}_{q(x)}\right\}.
			$$
			By using the continuous embedding $L^{q(.)}(\Omega) \hookrightarrow L^{2}(\Omega)$, we get 
			$$
			\varphi'(t) \geq \min\left\{C^{q^+}\|u\|^{q^+}_{2},C^{q^-}\|u\|^{q^-}_{2}\right\},
			$$
			for some positive constant $C>0$, and so, 
			\begin{equation} \label{IN1} 
			\varphi'(t) \geq \min\left\{C^{q^+}(\varphi(t))^{q^+/2},C^{q^-}(\varphi(t))^{q^-/2}\right\}:=h(t),
			\end{equation}
			Assume by contradiction that $T_{\max}=+\infty$.  We know that $\varphi(t)>0$ for all $t>0$, otherwise if $u(t)=0$ for some $ t \geq 0$, which is absurd, because of  $\eqref{non}$. Have this in mind, the inequality (\ref{IN1}) ensures $\varphi'(t)>0$ for all $t>0$, from where it follows that $\varphi$ is increasing in $(0,+\infty)$. Hence
			$$
			\varphi(t)-\varphi(0)=\int_{0}^{t}\varphi'(s)\,ds\geq \int_{0}^{t}h(s)\,ds\geq t \min\left\{C^{q^+}(\varphi(0))^{q^+/2},C^{q^-}(\varphi(0))^{q^-/2}\right\},
			$$   
			and so, 
			$$
			\varphi(t) \geq t\min\left\{C^{q^+}(\varphi(0))^{q^+/2},C^{q^-}(\varphi(0))^{q^-/2}\right\}+\varphi(0)\to +\infty, \quad \mbox{as} \quad t \to +\infty.
			$$
			From this, there is $t_0>0$ such that,
			\begin{equation} \label{IN2} 
			\varphi(t) >1, \quad \forall t \geq t_0. 
			\end{equation}
			From (\ref{IN1})-(\ref{IN2}), 
			$$
			\varphi'(t) \geq C^{q^+}(\varphi(t))^{q^+/2}, \quad \forall t \geq t_0.
			$$
			Therefore, 
			$$
			(-q^+/2+1)\frac{\varphi'(t)}{(\varphi(t))^{q^+/2}} \leq C_1, \quad \forall t \geq t_0,
			$$
			where $C_1=(-q^+/2+1)C^{p^+}<0$. Hence, 
			$$
			\int_{t_0}^{t}\frac{d}{ds}(\varphi(t))^{-q^+/2+1}\,ds \leq \int_{t_0}^{t}C_1\,ds=C_1(t-t_0), \quad \forall t \geq t_0
			$$
			that is,
			$$
			(\varphi(t))^{-q^+/2+1}-(\varphi(t_0))^{-q^+/2+1} \leq C_1(t-t_0), \quad \forall t \geq t_0
			$$
			or equivalently
			$$
			(\varphi(t))^{-q^+/2+1}(t)\leq C_1(t-t_0)+(\varphi(t_0))^{-q^+/2+1}(t_0), \quad \forall t \geq t_0.
			$$
			Since $C_1<0$, we have that  
			$$
			C_1(t-t_0)+(\varphi(t_0))^{-q^+/2+1}(t_0) \to -\infty, \quad \mbox{as} \quad t \to +\infty.
			$$
			Thus, there is $t_1>0$ such that 
			$$
			(\varphi(t))^{-q^+/2+1}(t) <0, \quad \forall t \geq t_1, 
			$$
			which is absurd, because $(\varphi(t))^{-q^+/2+1}(t)>0$ for all $t>0$.  This proves $T_{\max}<+\infty$.

\section*{Acknowledgements}
The author would like to thank Professor Claudianor Alves for his suggestions and fruitful discussions. The author would like to thank the anonymous referee for the careful reading of the paper and for his/her valuable comments. 

\end{document}